\newcommand{\R}{\mathbb{R}}
\newcommand{\N}{\mathbb{N}}
\newcommand{\calh}{\mathcal H}
\newcommand{\fa}{\hbox{ for all }}
\DeclareMathOperator{\diag}{diag}
\DeclareMathOperator{\Span}{span}
\newcommand{\pol}{\mathbb{P}}
\newcommand{\inner}[2]{\ifthenelse{\equal{#2}{}}{\left\langle\cdot,\cdot\right\rangle_{#1}}{\left\langle#2\right\rangle_{#1}}}
\newcommand{\Norm}[2]{\ifthenelse{\equal{#2}{}}{\left\|\cdot\right\|_{#1}}{\left\|#2\right\|_{#1}}}
\newcommand{\seminorm}[2]{\ifthenelse{\equal{#2}{}}{\left|\cdot\right|_{#1}}{\left|#2\right|_{#1}}}
\newcommand{\hpol}{\mathbb{H}}
\newtheorem{theorem}{Theorem}
\newtheorem{proposition}[theorem]{Proposition}
\newtheorem{corollary}[theorem]{Corollary}
\newtheorem{lemma}[theorem]{Lemma}
\newtheorem{definition}[theorem]{Definition} 
\newtheorem{remark}[theorem]{Remark}
\title{Interpolation with the polynomial kernels}
\author[1]{Giacomo Elefante}
\author[1]{Wolfgang Erb}
\author[1]{Francesco Marchetti} 
\affil[1]{Dipartimento di Matematica \lq\lq Tullio Levi-Civita\rq\rq, Universit{\`a} degli Studi di Padova (Italy)}
\author[2]{Emma Perracchione}
\affil[2]{Dipartimento di Scienze Matematiche \lq\lq Giuseppe Luigi Lagrange\rq\rq, Politecnico di Torino (Italy)}
\author[3]{Davide Poggiali}
\affil[3]{FAR Networks S.r.l., Cernusco S.N., Milano (Italy)}
\author[4]{Gabriele Santin}
\affil[4]{Digital Society Center (DIGIS), Fondazione Bruno Kessler, Trento (Italy)}
\begin{document}
\maketitle

\begin{abstract}
The polynomial kernels are widely used in machine learning and they are one of the default choices to develop kernel-based classification and regression models.
However, they are rarely used and considered in numerical analysis due to their lack of strict positive definiteness. In particular they do not enjoy the 
usual property of unisolvency for arbitrary point sets, which is one of the key properties used to build kernel-based interpolation methods.

This paper is devoted to establish some initial results for the study of these kernels, and their related interpolation algorithms, in the context of 
approximation theory. 
We will first prove necessary and sufficient conditions on point sets which guarantee the existence and uniqueness of an interpolant. We will 
then study the Reproducing Kernel Hilbert Spaces (or native spaces) of these kernels and their norms, and provide inclusion relations between spaces 
corresponding to different kernel parameters.  With these spaces at hand, it will be further possible to derive generic error estimates which apply to 
sufficiently smooth functions, thus escaping the native space. Finally, we will show how to employ an efficient stable algorithm to these kernels to obtain 
accurate interpolants, and we will test them in some numerical experiment.
After this analysis several computational and theoretical aspects remain open, and we will outline possible further research directions in a concluding section.

This work builds some bridges between kernel and polynomial interpolation, two topics to which the authors, to different extents, have been introduced under 
the supervision or through the work of Stefano De Marchi. For this reason, they wish to dedicate this work to him in the occasion of his 60th birthday. 
\end{abstract}

\section{Introduction}\label{sec:intro}
Positive definite kernels are widely used in a variety of problems ranging from numerical analysis to machine learning, including Gaussian process 
regression. 

In different fields they come into play from different directions. In numerical analysis, they provided data-dependent bases that permit interpolation of 
scattered data \cite{Buhmann2003,Fasshauer2007,Wendland2005}; in machine learning, they are usually the result of the application of a feature map on the 
input data, used to transform linear algorithms into nonlinear ones by means of an high dimensional space \cite{Schoelkopf2002,Shawe-Taylor2004,Steinwart2008}; 
in Gaussian process regression, they represent the covariance function of a stochastic process \cite{Rasmussen2006}.

Despite this remarkable variety, a large part of the success of kernel methods is due to the fact that they can be analyzed to some extent within the unified 
framework of Reproducing Kernel Hilbert Spaces (RKHS) \cite{Saitoh2016}, which provide a solid mathematical underpinning to different algorithmic approaches. This 
connection has the additional benefit that novel ideas and points of view may often spread from one field to another through this common perspective, see e.g. \cite{Belkin2018,Karvonen2020a,McCourt2017,Pagliana2020,Scheuerer2013} for a few recent examples. 

However, it is still the case that some requirements and conditions are peculiar to one or the other specific setting, and thus this spill over is not always 
possible. In particular, a major difference between the point of view of machine learning and Gaussian process on one hand, and the one of numerical analysis 
on the other, is the definiteness of the kernel in the following sense.
\begin{definition}[Definiteness classes]\label{def:pdclasses} 
Let $\Omega\neq \emptyset$ be a set and let $k:\Omega\times \Omega\to \R$ be symmetric. Then $k$ is said to be positive definite on $\Omega$ if for all 
$N\in\N$ and for all sets 
$X:=\{x_i\}_{i=1}^N\subset \Omega$ the kernel matrix $A:=(k(x_i, x_j))_{i,j=1}^N\in\R^{N\times N}$ is positive semidefinite. The kernel is additionally said 
to be strictly positive definite if $A$ is 
positive definite whenever the points in $X$ are pairwise distinct. 
\end{definition}
In the analysis of stochastic processes kernels are used as covariance functions, which are in general only positive definite. 
In machine learning, data approximation models are usually defined as the solution of an optimization problem, which can be proven to be convex even if the 
employed 
kernel is just positive definite. The approximation models considered in numerical analysis are instead mostly interpolatory, and their existence is guaranteed 
for general distributions of the interpolation points only if the kernel is strictly positive definite. Namely, given an input space $\Omega\neq\emptyset$, a 
set $X:=\{x_i\}_{i=1}^N\subset \Omega$ of pairwise distinct interpolation points, and a set $Y:=\{y_i\}_{i=1}^N\subset \R$ of target values, a kernel is used 
to build an interpolatory model
\begin{equation}\label{eq:kernel_model}
s(x):=\sum_{i=1}^N c_i k(x, x_i),\;\; x\in\Omega,\quad\quad s(x_i) = y_i,  \fa 1\leq i\leq N.
\end{equation}
This model exists precisely when there exists a vector $c:=(c_1, \dots, c_N)^T\in\R^N$ which 
solves 
\begin{equation}\label{eq:kernel_system}
A c = y,
\end{equation}
with $y:=(y_1, \dots, y_N)^T\in\R^N$ and $A$ as in Definition \ref{def:pdclasses}. The solvability of this system is in turn guaranteed if $A$ is positive 
definite and thus invertible for any set $X$, i.e., if $k$ is strictly positive definite. In other words, strictly positive definite kernels are used in 
numerical analysis to construct data-dependent interpolation models of the form \eqref{eq:kernel_model}, thus enabling the interpolation of arbitrarily 
scattered data for arbitrary input space dimension. This capability in particular permits to overcome the limitations of classical techniques such as 
polynomial interpolation, which require instead precise geometrical conditions on the interpolation points. 

This distinction has the effect that many kernels which are commonly employed in machine learning and Gaussian process regression 
are almost unknown in approximation theory and numerical analysis, since they are only positive definite. In particular, in this paper we consider the 
notable family of polynomial kernels $k_{a, p}(x, y):= (a + \inner{}{x, y})^p$ defined for $a\geq0$ and $p\in\N$ on a subset $\Omega\neq \emptyset$ of the 
Euclidean space $\R^d$. These kernels are widely used in machine learning, where they are often even considered the essential basic example of a positive 
definite kernel. For example, together with the Gaussian kernel, polynomial kernels are the only ones implemented by default in the widespread 
Scikit-Learn Python machine learning library \cite{sklearn_api,scikit-learn}, as well as in the Matlab Statistics and Machine Learning Toolbox \cite{Matlab}.

Due to their lack of strict positive definiteness, they have however received little attention in approximation theory. For this reason in this work we aim at 
establishing an interpolation theory for these polynomial kernels. In particular, after recalling some additional details on their definition and properties in 
Section \ref{sec:background}, we characterize in Section \ref{sec:unisolvent} sets of points $X$ which are unisolvent for the polynomial kernels, i.e., which 
allow unique interpolation. As proven in Theorem \ref{thm:existence}, we obtain the remarkable result that any set of pairwise distinct points in $\R^d$ is 
unisolvent provided $p$ is chosen large enough. After the existence of an interpolant is established, we obtain in Section \ref{sec:native_and_error} an error 
bound for its approximation error. This in particular allows one to ``escape the native space'' in the sense of \cite{Narcowich2006}, i.e., approximating 
functions which are outside of the RKHS of the kernel. In the same section we also use an argument of \cite{Zwicknagl2009} to provide a characterization of the 
RKHS, which turns out to be simply the space $\pol_p^d(\Omega)$ of $d$ variate polynomials of degree $p$ over $\Omega$ or the corresponding homogeneous space 
$\hpol_p^d(\Omega)$, each equipped with a suitable inner product. Moreover, we study the stability of this interpolation process in Section \ref{sec:stability}, 
and show that, although the direct solution of the linear system \eqref{eq:kernel_system} is possibly highly unstable, one can apply the celebrated RBF-QR algorithm \cite{Fasshauer2012b,Fornberg2011} to $k_{a,p}$, thus obtaining stable computations.
From this analysis of existence, convergence, and stability it turns out that polynomial-kernel interpolation is strictly related to standard polynomial 
interpolation, perhaps unsurprisingly. In particular, we argue that good interpolation points for these kernels can be found from good interpolation points for 
polynomial interpolation, such as \cite{Bos2006,Bos2010, Bos2011b,Bos2007,Caliari2005,Caliari2008}.
Finally, we test our findings in a number of experiments in Section \ref{sec:numerics}, and comment on some possible extensions in Section \ref{sec:conclusions}.

\section{Background on polynomials and the polynomial kernels}\label{sec:background}

\subsection{Multivariate polynomial spaces}
We start by recalling some notation and the necessary background results on multivariate polynomials.

Let $d\in\N$ and $p\in\N$. Given a multiindex $\zeta:=(\zeta_1, \dots, \zeta_d)\in\N_0^d$, we write $|\zeta|:=\zeta_1+\ldots+\zeta_d$ for its length and
$\zeta!:=\zeta_1!\cdot\ldots\cdot\zeta_d!$ for its factorial, and denote the monomial with degrees $\zeta$ as $x^\zeta:=\prod_{i=1}^d x_i^{\zeta_i}$, 
$x\in\R^d$. 
For two multiindices $\zeta,\beta\in\N_0^d$, the term $\delta_{\zeta\beta}$ has value $1$ when $\zeta_i=\beta_i$ for all $1\leq i\leq d$, and otherwise it has value zero.

We denote as $\pol_p^d$ the space of polynomials over $\R^d$ of total degree at most $p$, and as $\hpol_p^d$ the corresponding homogeneous space, 
i.e., the space of polynomials over $\R^d$ of total degree exactly $p$. The two spaces have dimension $M_{p}^d:= \binom{d+p}{d}=\dim(\pol_p^d)$ and 
$M_{p}^{d-1} = \dim(\hpol_p^d)$, respectively.

Using a notation that will be motivated in the next section, for any $a\geq 0$ we denote two sets of multi-indices that we will use repeatedly as
\begin{equation}\label{eq:index_sets}
I_a(p, d):=
\begin{cases}
\left\{\zeta \in \N_0^{d}, |\zeta|\leq p\right\}, & a>0\\
\left\{\zeta \in \N_0^{d}, |\zeta|= p\right\}, & a=0,
\end{cases}
\end{equation}
and set
\begin{equation}\label{eq:index_sets_dim}
M_a:=
\begin{cases}
M_{p}^d=\dim(\pol_{p}^d), & a>0\\
M_{p}^{d-1}=\dim(\hpol_{p}^d), & a=0,
\end{cases}
\end{equation}
so that $M_a= \dim(I_a(p, d))$.

\subsection{The polynomial kernels}
With these notations in hand, we can now give a formal definition of the family of polynomial kernels and state some of their properties. The content of 
this section is a collection 
of classical results, for which we refer e.g. to \cite{Schoelkopf2002,Shawe-Taylor2004,Steinwart2008}.

Let $d\in\N$, $\Omega\subset \R^d$, and let $a\geq0, p\in\N$ be fixed. The polynomial kernel $k:=k_{a, p}:\Omega\times\Omega\to\R$ is defined as $k(x, 
y):= \left(a + \left\langle x, y\right\rangle\right)^p$, $x, y\in\Omega$, where $\left\langle x,y\right\rangle$ is the inner product on $\R^d$.

If $a>0$, using the notation \eqref{eq:index_sets} and the multinomial expansion we have
\begin{equation}\label{eq:expansion_kernel}
k_{a,p}(x, y)
=\left(a + \sum_{i=1}^d x_i y_i \right)^p
=\sum_{\stackrel{(\zeta_0, \zeta) \in \N_0^{d+1}}{\zeta_0+|\zeta|= p}} \frac{p! a^{\zeta_0}}{\zeta_0!\zeta!}  x^\zeta y^\zeta
=\sum_{\zeta \in \N_0^{d}, |\zeta|\leq p} \frac{p! a^{p-|\zeta|}}{(p-|\zeta|)!\zeta!}  x^\zeta y^\zeta
=\sum_{\zeta \in I_a(p,d)} d_\zeta^a  x^\zeta y^\zeta,
\end{equation}
where for $a>0$ we defined 
\begin{equation}\label{eq:d_alpha}
d_\zeta^a:= \frac{p! a^{p-|\zeta|}}{(p-|\zeta|)!\zeta!}, \;\; \zeta \in I_a(p,d).
\end{equation}

Considering
an arbitrary enumeration $\{\zeta^{(1)}, \dots, \zeta^{(M_a)}\}$ of the set $I_a(p,d)$, with $M_a$ as defined in \eqref{eq:index_sets_dim},
the representation \eqref{eq:expansion_kernel} shows that the function $\Phi_{a,p}:\R^d\to\R^{M_a}$ defined for $a>0$ by
\begin{equation}\label{eq:feature_map}
\Phi_{a,p}(x) := \left[\sqrt{d_{\zeta^{(1)}}^a}\ x^{\zeta^{(1)}}, \sqrt{d_{\zeta^{(2)}}^a}\ x^{\zeta^{(2)}}, \dots, \sqrt{d_{\zeta^{(M_a)}}^a}\ 
x^{\zeta^{(M_a)}}\right]^T, \;\;x \in \R^d,
\end{equation}
is a feature map for $k_{a,p}$ on $\R^d$ with feature space $\R^{M_a}$, i.e., it holds $k_{a,p}(x, y) = \Phi_{a,p}(x)^T \Phi_{a,p}(y)$ for all $x,y\in\R^d$. 

If instead $a=0$, again with the notation \eqref{eq:index_sets} the same chain of reasoning as above proves that 
\begin{equation}\label{eq:expansion_kernel_hom}
k_{0,p}(x, y)
=\sum_{\zeta \in I_0(p,d)} d^0_\zeta  x^\zeta y^\zeta, \quad\quad d^0_\zeta:= \frac{p!}{\zeta!}, \;\; \zeta \in I_0(p,d).
\end{equation}
In this case, taking an arbitrary enumeration $\{\zeta^{(1)}, \dots, 
\zeta^{(M_0)}\}$ of this set, with $M_0$ as in \eqref{eq:index_sets_dim}, we obtain as before that a feature map for $k_{0, p}$ is $\Phi_{0,p}:\R^d\to\R^{M_0}$ with
\begin{equation}\label{eq:feature_map_hom}
\Phi_{0,p}(x) := \left[\sqrt{d^0_{\zeta^{(1)}}}\ x^{\zeta^{(1)}}, \sqrt{d^0_{\zeta^{(2)}}}\ x^{\zeta^{(2)}}, \dots, \sqrt{d^0_{\zeta^{(M_0)}}}\ 
x^{\zeta^{(M_0)}}\right]^T, \;\;x \in \Omega.
\end{equation}

Both for $a>0$ and $a=0$, the existence of a feature map implies that the polynomial kernel is positive definite, i.e., for each $X:=\{x_i\}_{i=1}^N\subset 
\Omega$ the kernel matrix $A:=(k_{a,p}(x_i, x_j))_{i,j=1}^N \in\R^{N\times N}$ is positive semidefinite. This is immediate from Definition \ref{def:pdclasses} 
since $A$ is the Gramian matrix of the $N$ vectors $\{\Phi_{a,p}(x):x\in X\}$.
On the other hand, since the image of this feature map is an $M_a$-dimensional feature space, and $M_a<\infty$, the kernel $k_{a,p}$ is not 
strictly positive definite, 
i.e., the matrix $A$ may be singular even for pairwise distinct points $X$. In particular, if $N>M_a$ there exists no set $X\in\R^d$ of $N$ points such that 
the kernel matrix is non singular, since this would require the $N$ vectors $\{\Phi_{a,p}(x):x\in X\}$ to be linearly independent in an $M_a<N$ dimensional 
space.

We recall moreover (see e.g. \cite{Saitoh2016}) that each positive definite kernel $k:\Omega\times\Omega\to\R$ is associated to a RKHS $(\calh, 
\inner{\calh}{})$, which is a 
Hilbert space of functions from $\Omega$ to $\R$, where the kernel acts as a reproducing kernel, i.e., it holds
\begin{itemize}
\item $k(\cdot, x) \in \Omega$ for all $x\in\Omega$
\item $\inner{\calh}{k(\cdot, x), f} = f(x)$ for all $x\in\Omega$ and $f\in\calh$,
\end{itemize}
and that this RKHS is unique given a positive definite kernel $k$ and a set $\Omega$. 

The RKHS of a kernel is often called its native space in the approximation theory literature. We denote as $\calh_{a, p}:=\calh_{a, p}(\Omega)$ the native 
space of $k_{a, p}$ on $\Omega$, and we will discuss its characterization in Section \ref{sec:native_and_error}. 

\begin{remark}
We remark that there are possible extensions to the definition of the polynomial kernel that we use in this section. Most notably, one may consider 
more general sets $\Omega$, not necessarily in the Euclidean space, and replace $\left\langle x,y\right\rangle$ with a corresponding inner product on $\Omega$. 
Although this extension is of potential interest, we do not consider it in this paper.
\end{remark}

\section{Existence and characterization of unisolvent sets}\label{sec:unisolvent}
We start by analyzing conditions on a set of interpolation points that guarantee the existence of a unique polynomial kernel interpolant. 
As recalled in Section \ref{sec:intro}, the kernel interpolant \eqref{eq:kernel_model} exists and is unique whenever the linear system \eqref{eq:kernel_system} 
has a unique solution, i.e., when the kernel matrix of $k_{a,p}$ on $X$ is invertible, i.e., positive definite since it is positive semidefinite by 
construction. We will thus investigate conditions for the invertibility of this kernel matrix.

To derive our characterization we are going to use some relations with interpolation points for classical polynomials. To this end, we recall the following 
definition.
\begin{definition}[Unisolvent set]\label{def:unisol}
Let $\Omega\subset \R^d$ and let $U\subset C(\Omega)$ be a finite dimensional linear space of continuous functions. A set of $N:=\dim(U)$ points $X:=\{x_i\}_{i=1}^N\subset\Omega$ is 
said to be $U$-unisolvent if one of the following equivalent conditions hold:
\begin{enumerate}[(i)]
    \item\label{item:unisol_one} For each $Y:=\{y_i\}_{i=1}^N$ there exists a unique element $u\in U$ which interpolates $Y$ on $X$, i.e, $u(x_i) = y_i$, $1\leq i\leq N$.
    \item\label{item:unisol_two} If $u\in U$ and $u_{|X}=0$, then $u=0$.
    \item\label{item:unisol_three} If $\{u_1, \dots, u_N\}$ is a basis of $U$, then the matrix $V:=V\left(\{u_j\}_{j=1}^N, X\right):=[u_j(x_i)]_{i,j=1}^N\in\R^{N\times N}$ is invertible. 
\end{enumerate}
\end{definition}
We furthermore recall that the matrix $V$ defined in point \eqref{item:unisol_three} is called a Vandermonde matrix if $U=\pol_d^p$ for some $p,d\in\N$, and 
if $\{u_1, \dots, u_N\}$ is any enumeration of the monomial basis of this space.

The fundamental step to derive the results of this section is the following simple lemma, which establishes a connection between the kernel matrix and a 
rectangular Vandermonde matrix.
\begin{lemma}\label{lemma:a_as_vdv}
Let $p\in\N$ and $a\geq 0$, and let $\left\{\zeta^{(i)}\right\}_{i=1}^{M_a}$ be an enumeration of $I_a(p, d)$. 
Let furthermore $X_N\subset\Omega$ be a set of $N\leq M_a$ pairwise distinct points and let $V\in\R^{N\times M}$ be the Vandermonde matrix given by the 
evaluation of the monomials $\{x^{\zeta^{(i)}}\}_{i=1}^{M_a}$ on $X_N$, with columns ordered according to the chosen enumeration, i.e.,
\begin{equation}\label{eq:partial_vdm}
V:= \begin{bmatrix}
    x_1^{\zeta^{(1)}}&\dots & x_1^{\zeta^{({M_a})}}\\
    \vdots & \ddots &\vdots\\
    x_N^{\zeta^{(1)}}&\dots & x_N^{\zeta^{({M_a})}}\\
    \end{bmatrix}.
\end{equation}
Then for all $a>0$ the kernel matrix $A$ of $k_{a,p}$ on $X_N$ satisfies
\begin{equation*}
A = V D V^T,
\end{equation*}
where $D:=\diag\left(d^a_{\zeta^{(1)}}, \dots, d^a_{\zeta^{(M_a)}}\right)$ and $d^a_\zeta$ is defined as in \eqref{eq:d_alpha} for $a>0$ and as in 
\eqref{eq:expansion_kernel_hom} for $a=0$.
\end{lemma}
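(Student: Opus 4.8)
The plan is to establish the factorization by a direct entry-wise computation, relying entirely on the multinomial expansion \eqref{eq:expansion_kernel} of the kernel already recorded in the previous section. Since both sides of the claimed identity are $N\times N$ matrices, it suffices to verify that their $(i,j)$ entries agree for every pair $1\le i,j\le N$.

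First I would write out the $(i,j)$ entry of $A$ using the definition of the kernel matrix and then substitute the expansion \eqref{eq:expansion_kernel}:
\[
A_{ij} = k_{a,p}(x_i, x_j) = \sum_{\zeta \in I_a(p,d)} d^a_\zeta\, x_i^\zeta x_j^\zeta = \sum_{m=1}^{M_a} d^a_{\zeta^{(m)}}\, x_i^{\zeta^{(m)}} x_j^{\zeta^{(m)}},
\]
where in the last step I have simply rewritten the sum over $I_a(p,d)$ as a sum over the chosen enumeration $\{\zeta^{(m)}\}_{m=1}^{M_a}$. Next I would compute the $(i,j)$ entry of the right-hand side $VDV^T$. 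Using that $D$ is diagonal with entries $D_{mm} = d^a_{\zeta^{(m)}}$, and that by \eqref{eq:partial_vdm} one has $V_{im} = x_i^{\zeta^{(m)}}$, the triple product collapses to
\[
(VDV^T)_{ij} = \sum_{m=1}^{M_a} V_{im}\, d^a_{\zeta^{(m)}}\, V_{jm} = \sum_{m=1}^{M_a} x_i^{\zeta^{(m)}}\, d^a_{\zeta^{(m)}}\, x_j^{\zeta^{(m)}}.
\]
Comparing the two displays shows $A_{ij} = (VDV^T)_{ij}$ for all $i,j$, which is the claim.

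There is essentially no obstacle in this argument: the content is entirely carried by the expansion \eqref{eq:expansion_kernel}, and the only point requiring care is the bookkeeping, namely that the columns of $V$ and the diagonal entries of $D$ are indexed by the \emph{same} enumeration of $I_a(p,d)$, so that the index $m$ matches across all three factors. Alternatively, and perhaps more transparently, I could phrase the same fact through the feature map of Section \ref{sec:background}: stacking the rows $\Phi_{a,p}(x_i)^T$ into an $N\times M_a$ matrix yields exactly $V D^{1/2}$ with $D^{1/2} := \diag(\sqrt{d^a_{\zeta^{(1)}}}, \dots, \sqrt{d^a_{\zeta^{(M_a)}}})$, whence $A = (VD^{1/2})(VD^{1/2})^T = VDV^T$ follows immediately from $k_{a,p}(x,y) = \Phi_{a,p}(x)^T\Phi_{a,p}(y)$. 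The case $a=0$ is handled identically, using \eqref{eq:expansion_kernel_hom} in place of \eqref{eq:expansion_kernel} together with the index set $I_0(p,d)$ and the coefficients $d^0_\zeta$.
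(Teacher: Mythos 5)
Your proposal is correct and follows essentially the same route as the paper: a direct entry-wise comparison of $A_{ij}=k_{a,p}(x_i,x_j)$ with $(VDV^T)_{ij}$ via the expansion \eqref{eq:expansion_kernel} (resp.\ \eqref{eq:expansion_kernel_hom} for $a=0$), with the same care that the enumeration indexing $V$ and $D$ coincide. The feature-map reformulation you mention is a harmless and equivalent repackaging of the same computation.
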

\begin{proof}
We consider the case $a>0$, since for $a=0$ the argument is the same. 
The result follows by direct computation using the representation \eqref{eq:expansion_kernel} of the kernel. Indeed, using the definition of $V, D$ given in 
the statement we have for all 
$1\leq i, j\leq N$ that
\begin{equation*}
A_{ij} 
= k_{a,p}(x_i, x_j) 
= \sum_{\zeta\in I(p,d)} d_\zeta^a  x_i^\zeta x_j^\zeta
= \sum_{\ell=1}^{M_a} d_{\zeta^{(\ell)}}^a  x_i^{\zeta^{(\ell)}} x_j^{\zeta^{(\ell)}}
= \sum_{\ell=1}^{M_a} d_\ell^a V_{i\ell} V_{j\ell}
=(V D V^T)_{ij},
\end{equation*}
which is the desired representation.
\end{proof}

We furthermore need the following result which shows that, under a certain rank condition, any set of points can be completed to a unisolvent set. Observe that 
for our purposes it is sufficient to prove that the set $X_{M}$ defined in the lemma is in $\R^d$, so we do not put much care in constraining its location into 
a smaller compact subset of $\R^d$.
\begin{lemma}\label{lemma:complete_unisolv}
Let $M,N\in\N$, $N\leq M$, $U:=\Span\{u_i\}_{i=1}^M$ be a linear space of functions in $\R^d$, and let $X_N\subset\R^d$ be such that the matrix $V:=[u_j(x_i)]_{1\leq i\leq N, 1\leq j\leq M}\in\R^{N\times M}$ has full row rank. Then there exist an $U$-unisolvent set $X_M\subset \R^d$ with $X_N\subset X_M$.
\end{lemma}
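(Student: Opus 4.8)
The plan is to build $X_M$ by adding points to $X_N$ one at a time, at each stage preserving full row rank of the evaluation matrix, until we reach $M$ points whose associated square matrix is invertible. By Definition \ref{def:unisol}, $U$-unisolvence concerns sets of $\dim(U)$ points, so for $X_M$ (having $M$ points) to be $U$-unisolvent we must have $\dim(U)=M$, i.e. the spanning functions $\{u_i\}_{i=1}^M$ are linearly independent; I will use this throughout. For $x\in\R^d$ I write the evaluation row $r(x):=(u_1(x),\dots,u_M(x))\in\R^M$, so that the $k\times M$ matrix attached to a point set $\{x_1,\dots,x_k\}$ has rows $r(x_1),\dots,r(x_k)$. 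The hypothesis that $V$ has full row rank says precisely that $r(x_1),\dots,r(x_N)$ are linearly independent.

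The induction step is the heart of the argument. Suppose I have points $x_1,\dots,x_k$ with $N\leq k<M$ and $r(x_1),\dots,r(x_k)$ linearly independent, and set $R_k:=\Span\{r(x_1),\dots,r(x_k)\}\subset\R^M$, a subspace of dimension $k<M$. I would then seek a point $x_{k+1}\in\R^d$ with $r(x_{k+1})\notin R_k$: adjoining such a point produces $k+1$ linearly independent rows, and moreover guarantees $x_{k+1}\neq x_1,\dots,x_k$ (whose rows already lie in $R_k$), so the enlarged collection is a genuine set.

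The only place I expect to do real work — the main obstacle — is proving that a point with $r(x_{k+1})\notin R_k$ always exists. I would argue by contradiction: if $r(x)\in R_k$ for every $x\in\R^d$, then since $R_k$ is a proper subspace ($k<M$) there is a nonzero $c=(c_1,\dots,c_M)\in\R^M$ orthogonal to $R_k$, and hence $c^T r(x)=\sum_{j=1}^M c_j u_j(x)=0$ for all $x\in\R^d$. This forces the function $\sum_{j=1}^M c_j u_j$ to vanish identically, contradicting the linear independence of the $u_i$. Thus the desired $x_{k+1}$ exists. Note that this existence argument produces a point somewhere in $\R^d$ with no control on its location, which is exactly what the remark preceding the lemma anticipates.

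Iterating the step $M-N$ times yields a set $X_M=\{x_1,\dots,x_M\}\subset\R^d$ with $X_N\subset X_M$ for which the square matrix $[u_j(x_i)]_{i,j=1}^M$ has $M$ linearly independent rows and is therefore invertible. By the equivalence in Definition \ref{def:unisol}, in particular condition \ref{item:unisol_three}, this says precisely that $X_M$ is $U$-unisolvent, completing the proof.
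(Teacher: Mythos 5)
Your proof is correct and follows essentially the same route as the paper's: both build $X_M$ by adding one point at a time and locate each new point by exploiting the fact that a nontrivial linear combination of the linearly independent functions $u_j$ cannot vanish identically on $\R^d$. The only cosmetic difference is that the paper certifies the rank increase by expanding a determinant along the added column and finding a non-root of the resulting function, whereas you argue via a vector orthogonal to the span of the existing evaluation rows; your explicit observation that the new point is automatically distinct from the previous ones is a nice touch that the paper only notes in passing.
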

\begin{proof}
The proof is a simple extension of Lemma 1 in \cite{Bialas2016}. Since $V$ has full row rank there exists $\{j_1,\dots, j_N\}\subset \{1, \dots, M\}$ such that 
$\tilde V:=[V_{i,j_\ell}]_{1\leq i,\ell\leq N}\in\R^{N\times N}$ is invertible, and in particular $\det(\tilde V)\neq 0$. We can then pick any $j_{N+1}\in 
\{1,\dots, M\}\setminus \{j_1,\dots, j_N\}$ and consider the matrix function $V'(x):=V(\{u_{j_\ell}\}_{\ell=1}^{N+1}, X_N\cup\{x\}\}$. Computing the 
determinant of $V'(x)$ by expanding the last column gives 
\begin{equation*}
g(x) := \det(V'(x))) = \sum_{j=1}^{N+1} c_j u_j(x),
\end{equation*}
where $c_{N+1} = \det(\tilde V)\neq 0$ by assumption. It follows that $g\neq 0$ since the $u_j$ are linearly independent, and thus there exists 
an $x_{N+1}\in\R^d$ such that $g(x_{N+1})\neq 0$ (clearly $x_{N+1}\notin X_N$). We can thus define $X_{N+1}:=X_N\cup\{x_{N+1}\}$ and repeat the operation by 
picking another $j_{N+2}$ until $\{j_1,\dots, j_M\}=\{1,\dots, M\}$.
\end{proof}

These lemmas immediately give the first characterization of unisolvency for polynomial kernel interpolation, that is stated in the following proposition. 
Observe that in this case we make a distinction between points in $\Omega$, which is the given domain where the interpolation problem is defined, and point 
which instead may be in $\R^d\setminus \Omega$. 
\begin{proposition}\label{prop:unisolvent_points}
Let $p\in\N$ and $a\geq 0$, and let $X_N\subset\Omega$ be a set of $N\leq M_a$ pairwise distinct points. 
Then the kernel matrix of $k_{a,p}$ on $X_N$ is invertible (and thus positive definite) if and only if the Vandermonde matrix 
\eqref{eq:partial_vdm} has full row rank $N$.

In particular, this is the case if and only if there exists a set $X_{M_a}\subset \Omega$ such that $X_N\subset X_{M_a}$ and $X_{M_a}$ is $\pol_p^d(\Omega)$-unisolvent 
if $a>0$, or $\hpol_p^d(\Omega)$-unisolvent if $a=0$.
\end{proposition}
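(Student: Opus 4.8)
The plan is to prove the two stated equivalences in turn, leaning entirely on the two preceding lemmas and reserving the real care for the containment in $\Omega$. First I would establish the first statement using Lemma~\ref{lemma:a_as_vdv}. By that lemma $A = V D V^T$, where $D = \diag(d^a_{\zeta^{(1)}}, \dots, d^a_{\zeta^{(M_a)}})$ has strictly positive diagonal entries, since every coefficient $d^a_\zeta$ in \eqref{eq:d_alpha} (for $a>0$) and in \eqref{eq:expansion_kernel_hom} (for $a=0$) is positive. Hence $D$ is positive definite, and for every $c \in \R^N$ one has $c^T A c = (V^T c)^T D (V^T c) \geq 0$, with equality precisely when $V^T c = 0$. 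Since $A$ is positive semidefinite by construction, it is invertible if and only if it is positive definite, i.e. if and only if $V^T c = 0$ forces $c = 0$. This says exactly that $\nullspace(V^T) = \{0\}$, equivalently that $V^T$ has full column rank $N$, equivalently that $V$ has full row rank $N$, which proves the first claim.

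For the second statement I would first record that the monomials $\{x^{\zeta^{(i)}}\}_{i=1}^{M_a}$ indexed by $I_a(p,d)$ form a basis of $\pol_p^d$ when $a>0$ and of $\hpol_p^d$ when $a=0$, because $I_a(p,d)$ collects exactly the exponents spanning the respective space and $M_a$ equals its dimension. For the ``if'' direction, assume a $\pol_p^d(\Omega)$-unisolvent (resp. $\hpol_p^d(\Omega)$-unisolvent) set $X_{M_a} \supset X_N$ of cardinality $M_a$ exists. By condition \eqref{item:unisol_three} of Definition~\ref{def:unisol}, the full square Vandermonde matrix $\bar V$ obtained by evaluating this monomial basis on $X_{M_a}$ is invertible, hence its rows are linearly independent. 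Since $X_N \subset X_{M_a}$ and the column ordering is the same, the $N$ rows of the matrix $V$ in \eqref{eq:partial_vdm} constitute a subset of the rows of $\bar V$, so they are linearly independent and $V$ has full row rank $N$, which by the first part makes $A$ invertible.

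For the ``only if'' direction I would invoke Lemma~\ref{lemma:complete_unisolv} with $U := \pol_p^d$ (resp. $\hpol_p^d$), $M := M_a$, and $u_i := x^{\zeta^{(i)}}$: the hypothesis of that lemma is precisely the full-row-rank condition on $V$, and its conclusion yields a $U$-unisolvent completion $X_{M_a} \supset X_N$. The main point requiring care, and the only genuine obstacle here, is that Lemma~\ref{lemma:complete_unisolv} only places the new points in $\R^d$, whereas the proposition asserts $X_{M_a} \subset \Omega$ (as is needed for $\pol_p^d(\Omega)$-unisolvency, whose definition requires the node set to lie in $\Omega$). I would resolve this by re-examining the completion step: at each stage one appends a point where a fixed nonzero polynomial $g$ does not vanish, and a nonzero polynomial cannot vanish on any subset of $\R^d$ with nonempty interior. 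Thus, whenever $\Omega$ has nonempty interior, each appended node can be chosen inside $\Omega$, keeping $X_{M_a} \subset \Omega$; this is exactly the place where the geometry of $\Omega$ enters, and where I would concentrate the remaining argument.
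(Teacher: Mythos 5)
Your proof is correct and follows essentially the same route as the paper's: the identity $A = V D V^T$ from Lemma~\ref{lemma:a_as_vdv} reduces invertibility of $A$ to the full-row-rank condition on $V$, the ``if'' direction extracts $N$ rows of the invertible square Vandermonde matrix of a unisolvent superset, and the ``only if'' direction is exactly Lemma~\ref{lemma:complete_unisolv}. Your extra care about keeping the completion inside $\Omega$ is well placed rather than a digression: the paper itself only guarantees $X_{M_a}\subset\R^d$ (see the remark preceding Lemma~\ref{lemma:complete_unisolv}, the discussion after the proposition, and the enclosing set $B$ introduced in Theorem~\ref{thm:stability}), so the ``$X_{M_a}\subset\Omega$'' in the statement should be read as ``$X_{M_a}\subset\R^d$'' unless one adds a hypothesis such as your nonempty-interior condition, under which your argument that a nonzero polynomial cannot vanish on all of $\Omega$ correctly places each appended node in $\Omega$.
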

\begin{proof}
For any $u\in\R^N\setminus\{0\}$ we have by Lemma \ref{lemma:a_as_vdv} that $u^T A u = u^T V D V^T u = v^T D v$, with 
$v:=V^T u\in\R^{M_a}$. Since the matrix $D$ is invertible by construction, we have that $ v^T D v=0$ if and only if $v=0$, i.e., if $V^T 
u=0$, i.e., if and only if the columns of $V^T$ - or the rows of $V$ - are linearly dependent. This proves that in fact $u^T A u\neq 0$ for all 
$u\in\R^N\setminus\{0\}$, i.e., $A$ is positive definite, if and only if the rows of $V$ are linearly independent.

It remains to prove that this condition is equivalent to the existence of a set $X_{M_a}$ that contains $X_N$ and is unisolvent for the corresponding space of polynomials.
If $X_N$ is a subset of a set of unisolvent points $X_{M_a}$ (either for $\pol_p^d$ or $\hpol_p^d$), then the columns of $V$ are clearly linearly independent, 
since $V$ is obtained by selecting $N$ rows from the full Vandermonde matrix $V'$ given by the evaluation of the same monomials on the entire set of points
$X_{M_a}$, and $V'$ is invertible by definition since $X_{M_a}$ is unisolvent for the corresponding space of polynomials.
The converse implication follows instead from Lemma~\ref{lemma:complete_unisolv}.
\end{proof}

As one may expect, the condition of the last proposition is related to the unisolvency of the interpolation set for standard polynomial interpolation.
However, it is remarkable that it is sufficient (and necessary) to have $X_N\subset X_{M_a}$ with $X_{M_a}$ a polynomially unisolvent set (either 
$\pol_p^d$-unisolvent for $a>0$, or $\hpol_p^d$-unisolvent for $a=0$), since this opens the 
possibility to solve polynomial-like interpolation problems with an arbitrary number of interpolation points. This is in contrast with the case of standard 
polynomial interpolation, which requires $N=M_a$ and $M_a$ takes 
only some specific values, depending on $d$ and $p$. Moreover, the construction of Proposition \ref{prop:unisolvent_points} shows that the points 
$X_{M_a}\setminus X_N$ are not bounded to be in $\Omega$, but can be chosen in the entire space $\R^d$.
 
As a consequence of Proposition \ref{prop:unisolvent_points}, we show that the connection with polynomial interpolation is even stronger, i.e., kernel 
interpolation with minimal $p$ is in fact plain polynomial interpolation, either in $\pol_p^d$ or $\hpol_p^d$. We have the following.
\begin{corollary}\label{cor:minimal_p}
Let $p\in\N$ and $a\geq 0$. Assume that $N=M_a$ and that the points $X_N$ are $\pol_p^d$-unisolvent if $a>0$ or $\hpol_p^d$-unisolvent if $a=0$.
Then the polynomial kernel interpolant on $X_N$ coincides with the polynomial interpolant from $\pol_p^d$ if $a>0$ or from $\hpol_p^d$ if $a=0$.
\end{corollary}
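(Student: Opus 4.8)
The plan is to observe that, in this critical case $N = M_a$, the kernel interpolant is automatically forced to lie inside the very polynomial space from which the polynomial interpolant is taken; the statement then reduces to the uniqueness of interpolation in that space. Throughout, write $U := \pol_p^d$ when $a>0$ and $U := \hpol_p^d$ when $a=0$, so that $\dim(U) = M_a = N$ and $X_N$ is $U$-unisolvent by hypothesis.

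The one nontrivial step is to exhibit the kernel interpolant as an element of $U$. For this I would read off the kernel's monomial expansion \eqref{eq:expansion_kernel} (respectively \eqref{eq:expansion_kernel_hom} when $a=0$): for a fixed center $x_i$ we have $k_{a,p}(x, x_i) = \sum_{\zeta \in I_a(p,d)} d_\zeta^a x_i^\zeta\, x^\zeta$, which is a linear combination of the monomials $x^\zeta$, $\zeta \in I_a(p,d)$. Since $I_a(p,d)$ indexes exactly a spanning set of $U$ (monomials of total degree $\leq p$ for $a>0$, or $=p$ for $a=0$), each basis function $k_{a,p}(\cdot, x_i)$ belongs to $U$, and therefore so does the kernel interpolant $s = \sum_{i=1}^N c_i k_{a,p}(\cdot, x_i)$.

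It then remains to pin down both interpolants and invoke uniqueness. Because $N = M_a = \dim(U)$ and $X_N$ is $U$-unisolvent, the square Vandermonde matrix \eqref{eq:partial_vdm} is invertible, so Proposition \ref{prop:unisolvent_points} guarantees that the kernel matrix is positive definite; hence \eqref{eq:kernel_system} is uniquely solvable and the kernel interpolant $s \in U$ with $s(x_i) = y_i$ exists. The polynomial interpolant $q \in U$ with $q(x_i) = y_i$ exists and is unique by condition \eqref{item:unisol_one} of Definition \ref{def:unisol}. Since $s - q \in U$ vanishes on $X_N$, condition \eqref{item:unisol_two} forces $s = q$, which is the claim. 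I do not expect any real obstacle here: the whole content is the membership $s \in U$, read directly from the expansion, and the two cases $a > 0$ and $a = 0$ go through uniformly once $U$ is chosen accordingly.
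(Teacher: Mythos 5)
Your proof is correct and follows essentially the same route as the paper: both arguments reduce to the observation that the kernel translates, and hence the kernel interpolant, lie in $\pol_p^d$ (resp.\ $\hpol_p^d$) by the expansion \eqref{eq:expansion_kernel}--\eqref{eq:expansion_kernel_hom}, equivalently Lemma \ref{lemma:a_as_vdv}, after which uniqueness of interpolation on a unisolvent set finishes the job. The only difference is one of packaging: the paper works at the matrix level, writing $A=V(DV^T)$ with $DV^T$ invertible to conclude that the kernel basis and the monomial basis span the \emph{same} space and then invoking point \eqref{item:unisol_one} of Definition \ref{def:unisol}, whereas you use only the inclusion $V_{a,p}(X)\subset\pol_p^d$ (resp.\ $\hpol_p^d$) together with point \eqref{item:unisol_two}, which is marginally more economical.
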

\begin{proof}
We have $A=V D V^T$ from Lemma \ref{lemma:a_as_vdv}, and since $M_a=N$ then $V$ is a square invertible matrix. It follows that $A=V B$ with an invertible 
matrix of change of basis $B:=DV^T$, and thus the monomial basis $\{x^{\zeta^{(i)}}\}_{i=1}^{M_a}$ spans the same space of the kernel basis $\{k(\cdot, 
x_i)\}_{i=1}^N$ of \eqref{eq:kernel_model}. In particular the two interpolants coincide by uniqueness (see point \eqref{item:unisol_one} of Definition 
\ref{def:unisol}).
\end{proof}

Finally, we combine Proposition \ref{prop:unisolvent_points} with a construction of polynomial-unisolvent sets in order to derive an explicit characterization 
of 
point sets for interpolation with the polynomial kernel in the case $a>0$.

\begin{theorem}\label{thm:existence}
Let $a> 0$ and let $X_N\subset\Omega$ be a set of $N$ pairwise distinct points. 
Then for any $p\geq d (N-1)$ there exists a set $X_{M_p^d}\subset \Omega$ of $M_p^d$ points such that $X_N\subset X_{M_p^d}$ and $X_{M_p^d}$ is 
$\pol_p^d$-unisolvent. 
In particular, the kernel matrix of $k_{a,p}$ on $X_N$ is invertible if $p\geq d (N-1)$.
\end{theorem}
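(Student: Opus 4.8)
The plan is to reduce the statement to the full-row-rank criterion of Proposition \ref{prop:unisolvent_points} and then to verify that criterion by an explicit tensor-grid construction whose total degree is controlled by $d(N-1)$. First I would record that, by Proposition \ref{prop:unisolvent_points} (in the case $a>0$, where $I_a(p,d)$ consists of all $\zeta$ with $|\zeta|\le p$ and $M_a=M_p^d$), it suffices to prove that the rectangular Vandermonde matrix $V$ of \eqref{eq:partial_vdm}, built from the monomials $\{x^\zeta:|\zeta|\le p\}$ evaluated on $X_N$, has full row rank $N$: the completion to a $\pol_p^d$-unisolvent superset $X_{M_p^d}\supset X_N$ is then furnished by Lemma \ref{lemma:complete_unisolv}, and the invertibility of the kernel matrix of $k_{a,p}$ on $X_N$ follows from Proposition \ref{prop:unisolvent_points}. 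Since the evaluation map $q\mapsto(q(x_i))_{i=1}^N$ is represented in the monomial basis precisely by $c\mapsto Vc$, full row rank of $V$ is equivalent to the surjectivity of this map, i.e., to the assertion that for every $y\in\R^N$ there is a polynomial $q\in\pol_p^d$ with $q(x_i)=y_i$ for all $1\le i\le N$.

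To produce such a $q$ I would surround the points by a tensor grid. For each coordinate $\ell\in\{1,\dots,d\}$ let $T_\ell:=\{x_{i,\ell}:1\le i\le N\}$ be the set of distinct $\ell$-th coordinates occurring in $X_N$, and set $n_\ell:=|T_\ell|\le N$. The product grid $G:=T_1\times\cdots\times T_d$ then contains $X_N$, and the tensor product of the one-dimensional Lagrange bases on the $T_\ell$ shows that $G$ is unisolvent for the tensor-product space $W$ consisting of polynomials whose degree in the $\ell$-th variable is at most $n_\ell-1$. Every element of $W$ has total degree at most $\sum_{\ell=1}^d(n_\ell-1)\le d(N-1)\le p$, so $W\subset\pol_p^d$. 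Given an arbitrary $y\in\R^N$, I would prescribe the values $y_i$ at the points of $X_N$ and the value $0$ at the remaining grid points of $G\setminus X_N$; unisolvency of $G$ for $W$ then yields a unique $q\in W\subset\pol_p^d$ realising these data, whence in particular $q(x_i)=y_i$. This establishes surjectivity of the evaluation map, hence full row rank of $V$, and the statement follows via the two cited results.

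The only genuinely nontrivial point is the degree accounting, which is exactly where the hypothesis $p\ge d(N-1)$ is used: one must guarantee that the interpolant stays within total degree $p$, and the tensor construction pays for separating the points coordinate by coordinate, spending degree up to $n_\ell-1\le N-1$ in each of the $d$ variables. I expect this to be the main (though mild) obstacle. I would also note that a slicker Lagrange-product argument --- choosing, for each pair $i\ne j$, a single affine factor that vanishes at $x_i$ and equals $1$ at $x_j$, and multiplying over $i\ne j$ --- already certifies full row rank at the smaller threshold $p\ge N-1$; the tensor-grid route is preferable here because it exhibits a concrete, structured configuration and dovetails with the completion step. Finally, since $p\ge d(N-1)\ge N-1$ forces $M_p^d\ge M_{N-1}^d\ge N$, the dimensional constraint $N\le M_p^d$ underlying Proposition \ref{prop:unisolvent_points} is automatically satisfied.
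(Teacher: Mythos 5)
Your proof is correct, but it takes a genuinely different route from the paper's. The paper constructs the unisolvent superset directly and geometrically: it invokes Chung and Yao's sufficient condition for $\pol_p^d$-unisolvency and builds, for each $x_i\in X_N$, $d$ hyperplanes meeting only at $x_i$ and avoiding the other points; the $\binom{Nd}{d}$ intersection points of the resulting $Nd$ hyperplanes in general position form a $\pol_{d(N-1)}^d$-unisolvent set containing $X_N$, and extra hyperplanes handle $p>d(N-1)$. You instead verify the full-row-rank criterion of Proposition \ref{prop:unisolvent_points} by showing the evaluation map $\pol_p^d\ni q\mapsto (q(x_i))_{i=1}^N$ is surjective, using the tensor-product space on the coordinate grid $T_1\times\cdots\times T_d$; the degree budget $\sum_{\ell}(n_\ell-1)\le d(N-1)$ lands exactly on the paper's threshold, and Lemma \ref{lemma:complete_unisolv} then supplies the completion to $X_{M_p^d}$. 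Your version is self-contained (no appeal to Chung--Yao, whose hyperplane construction the paper only argues informally) and isolates precisely where $p\ge d(N-1)$ enters; the paper's version buys an explicit, geometrically structured superset, which is what the later stability discussion (completing $X$ to an $X_{M_a}$ with small Lebesgue constant) would like to have in hand. Your closing observation deserves emphasis: the product of affine factors $\prod_{j\ne i}h_{ij}$, each vanishing at $x_j$ and equal to $1$ at $x_i$, certifies surjectivity already for $p\ge N-1$ independently of $d$, so the threshold $d(N-1)$ is not sharp for $d\ge 2$ --- this settles, negatively, the sharpness question raised in the remark following the theorem. Both proofs share the same mild looseness, acknowledged by the authors, that the completed set need not lie in $\Omega$.
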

\begin{proof}
By virtue of \cite[Theorem 1]{Chung1977}, it is sufficient for $X_{M_p^d}$ to satisfy the following geometric property: For each $x_i\in X_{M_p^d}$, there 
exist $p$ distinct hyperplanes $G_{i1},\dots,G_{ip}$ such that
\begin{enumerate}
    \item 
    $x_i$ does not lie on any of these hyperplanes;
    \item
    all the other nodes $X_{M_p^d}\setminus\{x_i\}$ lie on at least one of the hyperplanes.
\end{enumerate}
In order to construct $X_{M_p^d}$, we can proceed as follows. For each $x_i\in X_{N}$, let $L_{i_1},\dots,L_{i_d}$ be $d$ hyperplanes such that 
$L_{i_1}\cap\dots\cap L_{i_d}=\{x_i\}$ and $x_j\notin L_{i_1}\cup\dots\cup L_{i_d}$ for $j\neq i$ with $j,i=1,\dots,N$. Moreover, such $Nd$ hyperplanes are 
chosen to be pairwise non-parallel. This construction is always feasible. To get a better intuition, it is possible to reason iteratively. With $x_i\in X_{N}$ fixed, consider $d$ hyperplanes that do not intersect any point in $X_N$ besides $x_i$, thus they are not parallel and moreover choose them such that they do not contain any line connecting $x_i$ with another of the other points. Then, consider another point $x_j\in X_{N}$, $j\neq i$. We can again choose $d$ hyperplanes that intersect at $x_j$ and do not contain the lines connecting $x_j$ with the other points in $X_N$ and are not parallel with the other previously chosen. This is possible due to the infinitely many directions of the hyperplanes passing through $x_j$. Furthermore, we can surely select pairwise non-parallel hyperplanes (if two hyperplanes turn out to be parallel, we can simply rotate one of the two by using the corresponding node as center of rotation and avoid the parallelism). 

Then, proceeding with the proof, the considered hyperplanes intersect at $\binom{Nd}{d}$ points that satisfy the mentioned geometric property. Then, 
by setting $p=Nd-d=d(N-1)$, we have $\binom{Nd}{d}=M_p^d$ and $X_{M_p^d}$ consists of such points.

Moreover, we observe that it is possible to construct $X_{M_p^d}$ for any $p= Nd-d+j$, $j\ge 1$, by adding to the $Nd$ hyperplanes further $j$ pairwise 
non-parallel ones that intersect the other hyperplanes in distinct new points, taking then the resulting intersection points. This proves that the same 
construction works for any $p\geq d(N-1)$.

Finally, Proposition \ref{prop:unisolvent_points} proves that $X_N$ can be used for interpolation with the polynomial kernel $k_{a,p}$ if $p\geq d(N-1)$.
\end{proof}

We remark that this theorem has the notable implication that, if $a>0$, any set of pairwise distinct points can be used for interpolation with $k_{a,p}$ 
provided that $p$ is large enough.

\begin{remark}
It remains open
to investigate if this construction works also for $a=0$, i.e., for the space of homogenous polynomials. 
Moreover, it would be interesting to investigate if the lower bound $p\geq d(N-1)$ provided by the theorem is sharp. For now, we observe that this is clearly 
optimal for $d=1$, since in this case the theorem implies that any $p\geq N-1$ can be used, or in fact that no point needs to be added. This corresponds to the 
fact that any set of pairwise distinct points is unisolvent for polynomial interpolation in $\R$.
\end{remark}

\section{Interpolation, stability, and error estimation}\label{sec:native_and_error}
Now that the existence of interpolants is established we want to understand the corresponding approximation behavior. 

In the last section we dealt with interpolation problems in terms of generic target data $y\in\R^N$. From now on, we will additionally assume that there exists 
a continuous function $f\in C(\Omega)$ such that $y_i:=f(x_i)$, $1\leq i\leq N$. In this case, we will denote the $k_{a,p}$-interpolant \eqref{eq:kernel_model} 
as $I_{X,a,p} f$. From the representation \eqref{eq:kernel_model}, it is immediate to see that $I_{X,a,p} f$ is an element of the linear subspace 
$V_{a,p}(X):=\Span\{k_{a,p}(\cdot, x):x\in X\}\subset \calh_{a,p}$, and thus $I_{X,a,p}$ can be understood as a map from $C(\Omega)$ (or $\calh_{a,p}$) to 
$V_{a,p}(X)$. For this reason, we are interested in obtaining more explicit information on the native space $\calh_{a,p}(\Omega)$ on $\Omega\subset\R^d$, which 
will connect it to suitable spaces of polynomials. 

In any case, we underline that in order to obtain asymptotic stability or convergence estimates, one would like to consider an increasing number $N$ of 
interpolation points. To ensure the existence of the interpolant, one thus needs to consider a kernel $k_{a,p}$ with an $N$ (or $X$) dependent parameter $p$, 
and the design of an optimal choice of $p$ to have stability and convergence is an interesting question. In the following sections we will try to connect 
the stability and accuracy of kernel interpolation to polynomial interpolation, which seems a promising way to address these issues.

\begin{remark}\label{rem:comments_on_existence}
In contrast with polynomial interpolation, the interpolation space $V_{a,p}(X)$ is depending on $X$. In the language of Definition~\ref{def:unisol}, we 
should thus say that $X$ allows unique interpolation by $k_{a,p}$ if and only if the points $X$ are $V_{a,p}(X)$-unisolvent. 
To simplify the presentation, in the following we will instead simply write that the set $X$ is $\calh_{a,p}$-unisolvent.
\end{remark}

\subsection{Native space}
We first derive some characterization of the space $\calh_{a,p}$ for $a\geq 0$ and $p\in\N$, and to this end we introduce some additional notation. Given 
$\gamma\in\N_0^d$ with $|\gamma|\leq p$, and $f\in C^p(\Omega)$, we write 
\begin{equation*}
D^\gamma f(x):=\prod_{i=1}^d \left(\partial_{x^{(i)}}^{\gamma^{(i)}} f(x)\right)
\end{equation*}
for the derivative of $f$ with multi-index $\gamma$. 
For all $a\geq 0$ and $\zeta\in I_a(p, d)$, we furthermore define the weights
\begin{equation}\label{eq:def_w}
w_\zeta^a:= (\zeta!)^2 d_\zeta^a=
\begin{cases}
\frac{p! \zeta!}{(p-|\zeta|)!} a^{p-|\zeta|}, & a>0,\\
p! \zeta!, & a=0,\\
\end{cases}
\end{equation}
so that \eqref{eq:expansion_kernel} and \eqref{eq:expansion_kernel_hom} give
\begin{equation*}
k_{a,p}(x, y) 
= \sum_{\zeta\in I_a(p,d)} d_\zeta^a  x^\zeta y^\zeta
= \sum_{\zeta\in I_a(p,d)} w_\zeta^a \frac{x^\zeta}{\zeta!} \frac{y^\zeta}{\zeta!}.
\end{equation*}
For $\gamma\in I_a(p,d)$ and $x\in\Omega$ this implies that
\begin{equation}\label{eq:kernel_der}
\left(D^\gamma_x k_{a,p}(x, y)\right)_{x=0}
= \sum_{\zeta\in I_a(p,d)} w_\zeta^a  \left(D^\gamma \frac{x^\zeta}{\zeta!} \right)_{x=0} \frac{y^\zeta}{\zeta!} =  w_\gamma^a \frac{y^\gamma}{\gamma!},
\end{equation}
since $\left(D^\gamma x^\zeta\right)_{x=0}=0$ if $\zeta\neq\gamma$, while $D^\gamma x^\gamma/\gamma!=1$.

With these observations we can prove the following result. The characterization of $\calh_{a,p}$ is a special case of the argument of Section 2 in 
\cite{Zwicknagl2009} that we repeat for completeness.
\begin{theorem}\label{thm:ns_characterization}
The native space of $k_{a,p}$ on $\Omega$ is $\calh_{a, p}(\Omega)=\pol_p^d(\Omega)$ if $a>0$ and $\calh_{0, p}=\hpol_p^d(\Omega)$ if $a=0$, with the inner 
product
\begin{equation}\label{eq:inner_product}
  \inner{\calh_{a,p}}{f, g}:=\sum_{\gamma\in I_a(p, d)} \frac{1}{w_\gamma^a} D^\gamma f(0) D^\gamma g(0),\;\;f,g\in\calh_{a,p}.
\end{equation}
\end{theorem}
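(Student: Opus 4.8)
The plan is to follow the standard route for identifying a reproducing kernel Hilbert space: exhibit a concrete candidate Hilbert space of functions on $\Omega$ on which $k_{a,p}$ acts as reproducing kernel, and then invoke the uniqueness of the RKHS recalled in Section \ref{sec:background} to conclude that this candidate must coincide with $\calh_{a,p}$. The candidate is $\pol_p^d(\Omega)$ (or $\hpol_p^d(\Omega)$ when $a=0$), where each element is viewed as the restriction to $\Omega$ of a uniquely determined global polynomial, equipped with the bilinear form \eqref{eq:inner_product}.

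First I would check that \eqref{eq:inner_product} is a genuine inner product. Writing generic elements as $f=\sum_{\zeta\in I_a(p,d)} c_\zeta x^\zeta$ and $g=\sum_{\zeta\in I_a(p,d)} b_\zeta x^\zeta$, the elementary relation $\left(D^\gamma x^\zeta\right)_{x=0}=\gamma!\,\delta_{\gamma\zeta}$ gives $D^\gamma f(0)=c_\gamma\gamma!$, so that \eqref{eq:inner_product} reduces to the weighted coefficient form $\sum_\gamma (\gamma!)^2 (w_\gamma^a)^{-1} c_\gamma b_\gamma = \sum_\gamma (d_\gamma^a)^{-1} c_\gamma b_\gamma$, using $w_\gamma^a=(\gamma!)^2 d_\gamma^a$ from \eqref{eq:def_w}. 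Since $d_\gamma^a>0$ for every $\gamma\in I_a(p,d)$, this is a positive definite weighted $\ell^2$ product on the coefficient vectors; completeness is then automatic by finite dimensionality, so the candidate is a Hilbert space.

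Next I would verify the two defining properties of a reproducing kernel. Membership $k_{a,p}(\cdot,x)\in\pol_p^d(\Omega)$ (resp. $\hpol_p^d(\Omega)$) is immediate from the expansions \eqref{eq:expansion_kernel} and \eqref{eq:expansion_kernel_hom}. For the reproducing property, fix $f\in\calh_{a,p}$ and $x\in\Omega$ and apply \eqref{eq:inner_product} with $g(\cdot)=k_{a,p}(\cdot,x)$: by symmetry of the kernel together with the key identity \eqref{eq:kernel_der}, namely $\left(D^\gamma_y k_{a,p}(y,x)\right)_{y=0}=w_\gamma^a\, x^\gamma/\gamma!$, the weights $w_\gamma^a$ cancel and the sum collapses to $\sum_\gamma (x^\gamma/\gamma!)\,D^\gamma f(0)=\sum_\gamma c_\gamma x^\gamma = f(x)$, which is exactly $\inner{\calh_{a,p}}{k_{a,p}(\cdot,x),f}=f(x)$. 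The $a=0$ case is identical once $I_a(p,d)$ is read as the homogeneous index set. Having produced a Hilbert space of functions on $\Omega$ for which $k_{a,p}$ reproduces, the uniqueness of the RKHS forces it to coincide with $\calh_{a,p}$.

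The step I expect to require the most care is not any of the cancellation computations, which are routine once \eqref{eq:kernel_der} is in hand, but rather the well-definedness of the form \eqref{eq:inner_product} as a product on functions defined only on $\Omega$: one must ensure that the derivative functionals $f\mapsto D^\gamma f(0)$ descend to $\pol_p^d(\Omega)$, i.e. that the polynomial representative of an element of the native space is unique and hence its coefficients (and so its derivatives at $0$) are determined by the restriction to $\Omega$. This is the point where a richness hypothesis on $\Omega$ enters, since it holds precisely when $\Omega$ is not contained in the zero set of a nonzero polynomial of degree $\le p$; under such a condition a polynomial vanishing on $\Omega$ is the zero polynomial, which simultaneously makes the form positive definite as a product on functions and identifies $\pol_p^d(\Omega)$ with the global polynomial space carrying \eqref{eq:inner_product}.
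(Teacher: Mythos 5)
Your proposal is correct and follows essentially the same route as the paper's proof: exhibit $\pol_p^d(\Omega)$ (resp.\ $\hpol_p^d(\Omega)$) with the form \eqref{eq:inner_product} as a finite-dimensional Hilbert space, check that $k_{a,p}(\cdot,x)$ belongs to it, verify the reproducing property via the identity \eqref{eq:kernel_der}, and conclude by uniqueness of the RKHS. Your closing remark that the functionals $f\mapsto D^\gamma f(0)$ must descend to functions known only on $\Omega$ --- i.e.\ that $\Omega$ must not lie in the zero set of a nonzero polynomial of degree at most $p$ --- is a genuine point that the paper's proof passes over silently when it asserts that \eqref{eq:inner_product} ``clearly'' defines a positive definite form on $\pol_p^d(\Omega)$.
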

\begin{proof}
We prove the result for $a>0$, since if $a=0$ the same reasoning applies.
Namely, the expression \eqref{eq:inner_product} clearly defines a symmetric, bilinear, and positive definite form on $\pol_p^d(\Omega)$, and thus an inner 
product. 
Since $\pol_p^d(\Omega)$ is finite it follows that $\left(\pol_p^d(\Omega), \langle \cdot, \cdot\rangle_{\calh_{a,p}}\right)$ is a Hilbert space.
If we prove that $k_{a,p}$ is a reproducing kernel on this space then it must hold  that $\calh_{a,p}(\Omega)=\pol_p^d(\Omega)$ by uniqueness of the native 
space of a given kernel (see Section \ref{sec:background}). 

This is indeed the case since $k_{a,p}(\cdot, x)\in \pol_p^d(\Omega)$ for all $x\in\Omega$ thanks to \eqref{eq:expansion_kernel}. Moreover, for all 
$f:=\sum_{\beta\in I_a(p,d)}c_\beta x^\beta\in\pol_p^d(\Omega)$ it holds $D^\gamma f(0) = \gamma! c_\gamma$, and thus for all $x\in\Omega$ using 
\eqref{eq:kernel_der} we have 
\begin{equation*}
\left\langle f, k_{a,p}(\cdot, y)\right\rangle_{\calh_{a,p}} 
= \sum_{\gamma\in I_a(p,d)} \frac{1}{w_\gamma^a} D^\gamma f(0) \left(D^\gamma_x k_{a,p}(x, y)\right)_{x=0}
= \sum_{\gamma\in I_a(p,d)} \frac{1}{w_\gamma^a} \gamma! c_\gamma w_\gamma^a \frac{y^\gamma}{\gamma!}
= \sum_{\gamma\in I_a(p,d)} c_\gamma y^\gamma = f(y),
\end{equation*}
which is the reproducing property of the kernel. We thus have that $k_{a,p}$ is a reproducing kernel and the first part of the theorem is proven. The same 
argument works for $a=0$ and $\hpol_p^d$ using \eqref{eq:expansion_kernel_hom} in place of \eqref{eq:expansion_kernel}.
\end{proof}

This characterization makes it possible to study in an explicit manner the effect of the parameters $a,p$ on the native spaces, and thus on the corresponding 
approximants.
\begin{corollary}\label{cor:norm_equiv}
For any $a\geq 0$, $p\in\N$ we have the following.
\begin{enumerate}[(i)]
\item\label{item:equiv} 
If $0<a'\leq a$ then the native spaces of $k_{a,p}$ and $k_{a', p}$ are norm equivalent, i.e., $\calh_{a, p}=\calh_{a', p}$ as sets and
\begin{equation}\label{eq:norm_equiv}
\left(a'/a\right)^{p/2} \left\|f\right\|_{\calh_{a', p}}
\leq 
\left\|f\right\|_{\calh_{a, p}}
\leq 
\left\|f\right\|_{\calh_{a', p}} \;\;\fa f\in \calh_{a, p},
\end{equation}
while for any $a>0$ it holds $\calh_{0,p}\subset \calh_{a, p}$ with $\Norm{\calh_{a, p}}{f}=\Norm{\calh_{0, p}}{f}$ for all $f\in \calh_{0, p}$.

\item\label{item:equiv_pq} If $a>0$ and $p,q\in\N$ with $0\leq q\leq p$, we have that $\calh_{a, q}\subset\calh_{a, p}$, and the norms of the two spaces are 
equivalent on $\calh_{a, q}$ with
\begin{equation}\label{eq:norm_equiv_pq}
a^{(q-p)/2} \left\|f\right\|_{\calh_{a, p}}
\leq 
\left\|f\right\|_{\calh_{a, q}}
\leq 
a^{(q-p)/2} \binom{p}{p- q}^{1/2} \left\|f\right\|_{\calh_{a, p}}
\;\;\fa f\in \calh_{a, q}.
\end{equation}
\end{enumerate}
\end{corollary}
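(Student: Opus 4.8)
The plan is to reduce every claim to a term-by-term comparison of the diagonal weights furnished by Theorem \ref{thm:ns_characterization}. That theorem identifies each native space as a polynomial space equipped with the inner product \eqref{eq:inner_product}, so that for $f\in\calh_{a,p}$ one has $\left\|f\right\|_{\calh_{a,p}}^2=\sum_{\gamma\in I_a(p,d)}\frac{1}{w_\gamma^a}\left(D^\gamma f(0)\right)^2$, a weighted $\ell^2$ norm of the coordinate vector $\left(D^\gamma f(0)\right)_{\gamma}$. Since the coordinates $D^\gamma f(0)$ do not depend on the kernel parameters, every inclusion is immediate from the corresponding inclusion of index sets (hence of polynomial spaces), and every norm comparison reduces to bounding the ratio of the two weight sequences uniformly over the relevant index set. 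First I would record these set inclusions: for part \eqref{item:equiv}, $I_a(p,d)=I_{a'}(p,d)$ forces $\calh_{a,p}=\calh_{a',p}=\pol_p^d(\Omega)$ as sets, while $I_0(p,d)\subset I_a(p,d)$ gives $\calh_{0,p}=\hpol_p^d(\Omega)\subset\pol_p^d(\Omega)=\calh_{a,p}$; for part \eqref{item:equiv_pq}, $I_a(q,d)\subset I_a(p,d)$ gives $\calh_{a,q}=\pol_q^d(\Omega)\subset\pol_p^d(\Omega)=\calh_{a,p}$.

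For part \eqref{item:equiv}, on the shared index set the two weights differ only through the power of $a$: from \eqref{eq:def_w} the ratio of norm coefficients at index $\gamma$ is $\frac{1/w_\gamma^a}{1/w_\gamma^{a'}}=(a'/a)^{p-|\gamma|}$. As $|\gamma|$ runs over $0\le|\gamma|\le p$ and $a'/a\le 1$, this ratio takes values in $\left[(a'/a)^p,\,1\right]$, attaining the extremes at $|\gamma|=0$ and $|\gamma|=p$. Summing the term-by-term bounds over $\gamma$ and taking square roots yields exactly \eqref{eq:norm_equiv}. For the homogeneous inclusion I would use that a homogeneous $f\in\hpol_p^d$ has $D^\gamma f(0)=0$ whenever $|\gamma|<p$, so only the top-degree indices contribute to either norm; and on those indices \eqref{eq:def_w} gives $w_\gamma^a=p!\,\gamma!=w_\gamma^0$, so the surviving terms are identical and the two norms coincide on $\calh_{0,p}$.

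Part \eqref{item:equiv_pq} follows the same template with $q$ playing the role of $a'$. For $f\in\pol_q^d$ one has $D^\gamma f(0)=0$ for $|\gamma|>q$, so both $\left\|f\right\|_{\calh_{a,q}}^2$ and $\left\|f\right\|_{\calh_{a,p}}^2$ reduce to sums over $I_a(q,d)$, and the only content is the ratio of the two weight sequences restricted to these indices. By \eqref{eq:def_w} the ratio of norm coefficients at index $\gamma$ (the $\calh_{a,q}$ coefficient over the $\calh_{a,p}$ one) factors as a common power of $a$ governed by $p-q$ times the falling-factorial quotient $\frac{p(p-1)\cdots(p-|\gamma|+1)}{q(q-1)\cdots(q-|\gamma|+1)}$; the latter equals $1$ at $|\gamma|=0$ and increases monotonically to $\binom{p}{p-q}$ at $|\gamma|=q$. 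Reading off the minimum and maximum of the full ratio over $0\le|\gamma|\le q$ thus produces two constants differing precisely by the factor $\binom{p}{p-q}^{1/2}$, and inserting them into the term-by-term estimate and taking square roots delivers the two-sided bound \eqref{eq:norm_equiv_pq}.

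The routine parts are the set inclusions and the summation step, which are identical across all cases. The step I expect to be the main obstacle is the constant-chasing in part \eqref{item:equiv_pq}: correctly separating the weight ratio into its $a$-dependent factor and its purely combinatorial falling-factorial factor, verifying that the latter is monotone in $|\gamma|$ so that its extremes occur at the endpoints $|\gamma|\in\{0,q\}$, and checking that the endpoint value is indeed $\binom{p}{p-q}$. A secondary subtlety is the homogeneous boundary case in part \eqref{item:equiv}, where the key cancellation is the coincidence $w_\gamma^a=w_\gamma^0$ at $|\gamma|=p$; I would verify this identity directly from \eqref{eq:def_w} before concluding the norm equality.
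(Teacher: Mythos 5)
Your proposal is correct and takes essentially the same route as the paper: both arguments reduce each claim to a term-by-term comparison of the weights $w_\gamma^a$ in the norm \eqref{eq:inner_product}, locate the extremes of the weight ratio at $|\gamma|=0$ and at the maximal degree, and sum; your explicit monotonicity check of the falling-factorial quotient is the only step the paper merely asserts. One remark worth making: what your term-by-term estimate in part (ii) actually yields --- and what the paper's own proof yields in its final display --- is $a^{(p-q)/2}\left\|f\right\|_{\calh_{a,p}} \leq \left\|f\right\|_{\calh_{a,q}} \leq a^{(p-q)/2}\binom{p}{p-q}^{1/2}\left\|f\right\|_{\calh_{a,p}}$, with exponent $(p-q)/2$ rather than the $(q-p)/2$ printed in \eqref{eq:norm_equiv_pq}; the printed exponent is a sign typo (take $d=1$, $q=1$, $p=2$, $a=4$, $f\equiv 1$: then $\left\|f\right\|_{\calh_{a,q}}=a^{-1/2}$ and $\left\|f\right\|_{\calh_{a,p}}=a^{-1}$, violating the printed upper bound), as is also confirmed by the paper's subsequent remark that equality in the lower bound is attained at $|\zeta|=0$. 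So your closing claim that the estimate ``delivers \eqref{eq:norm_equiv_pq}'' is literally accurate only after this sign correction, which your derivation in fact supplies.
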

\begin{proof}
For the first point we have clearly $\calh_{a, p}=\calh_{a', p}=\pol_p^d$ as sets. Moreover for all 
$\zeta\in I_a(p, d)$ we have
\begin{equation*}
w_\zeta^a
= \frac{p! \zeta!}{(p-|\zeta|)!} a^{p-|\zeta|} 
= \left(\frac{a}{a'}\right)^{p-|\zeta|}  w_\zeta^{a'},
\end{equation*}
and thus
\begin{equation}\label{eq:w_bounds}
w_{\zeta}^{a'}
\leq 
w_\zeta^a 
\leq 
\left(a/a'\right)^{p}  w_\zeta^{a'},
\end{equation}
since $a/a'\geq 1$. Using this relation in the definition \eqref{eq:inner_product} of the norm implies that
\begin{align*}
\left(\frac{a'}{a}\right)^{p/2} \left\|f\right\|_{\calh_{a', p}}
\leq 
\left\|f\right\|_{\calh_{a, p}}
\leq 
\left\|f\right\|_{\calh_{a', p}},
\end{align*}
which is \eqref{eq:norm_equiv}. 
Finally $\calh_{0, p}=\hpol_p^d\subset \pol_p^d = \calh_{a,p}$ for all $a>0$, and so it makes sense to compute the $\calh_{a,p}$-norm of $f(x)=\sum_{\zeta\in 
I_0(p,d)}c_{\zeta} x^\zeta\in \calh_{0,p}$. Since $D^\gamma f(0) = 0$ for all $\gamma\in \N_0^d$ with $|\gamma|<p$,
and since $w_\gamma^a= w_{\gamma}^0$ if $|\gamma|=p$ (see \eqref{eq:def_w}), we have
\begin{align*}
\Norm{\calh_{a,p}}{f}^2 
= \sum_{\gamma\in I_a(p, d)} \frac{1}{w_\gamma^a} \left(D^\gamma f(0)\right)^2
= \sum_{\gamma\in I_0(p, d)} \frac{1}{w_\gamma^a} \left(D^\gamma f(0)\right)^2
= \sum_{\gamma\in I_0(p, d)} \frac{1}{w_\gamma^0} \frac{a^{p-|\gamma|}}{(p-|\gamma|)!} \left(D^\gamma f(0)\right)^2
=\Norm{\calh_{0,p}}{f}^2,
\end{align*}
and this concludes the proof of the first part.

In the second case the space inclusion is also clear, and to prove the norm equivalence we write $w_{\zeta}^a(p)$, $w_{\zeta}^a(q)$ with an explicit 
dependence on the polynomial degrees $p,q$.  
For $f\in \calh_{a,q}$ we have by definition of the norm of $\calh_{a,p}$ that
\begin{equation}\label{eq:equiv_tmp}
\Norm{\calh_{a,p}}{f}^2 
= \sum_{\zeta\in I_a(p, d)} \frac{1}{w_\zeta^a(p)} \left(D^\zeta f(0)\right)^2
= \sum_{\zeta\in I_a(q, d)} \frac{1}{w_\zeta^a(p)} \left(D^\zeta f(0)\right)^2,
\end{equation}
where the second equality follows from the fact that the derivatives vanish for $|\zeta|>q$. Moreover for all $\zeta \in I_a(q, d)$ we have
\begin{equation*}
w_\zeta^a(p)
= \frac{p! \zeta!}{(p-|\zeta|)!}a^{p-|\zeta|}
= \frac{p\cdots (q+1) q! \zeta!}{(p-|\zeta|)\cdots(q+1-|\zeta|)(q-|\zeta|)!}a^{q-|\zeta|}a^{p-q}
= \frac{p\cdots (q+1)}{(p-|\zeta|)\cdots(q+1-|\zeta|)}a^{p-q} w_\zeta^a(q),
\end{equation*}
and this quantity is minimized when $|\zeta|=0$ and maximized when $|\zeta|=q$, giving
\begin{equation*}
a^{p-q} w_\zeta^a(q)
\leq 
w_\zeta^a(p)
\leq  
a^{p-q} \binom{p}{p-q} w_\zeta^a(q).
\end{equation*}
Inserting these bounds in  \eqref{eq:equiv_tmp} gives 
\begin{equation*}
a^{q-p} \binom{p}{p- q}^{-1} \left\|f\right\|_{\calh_{a, q}}^2
\leq 
\left\|f\right\|_{\calh_{a, p}}^2
\leq 
a^{q-p}  \left\|f\right\|_{\calh_{a, q}}^2
\;\;\fa f\in \calh_{a, q},
\end{equation*}
which can be rearranged to obtain \eqref{eq:norm_equiv_pq}.
\end{proof}

\begin{remark}
The inclusion relation $\calh_{a,p}\subset\calh_{a,p'}$ for $p\leq p'$ and $a=1$, and the equivalence between the corresponding norms, was 
already proven in \cite{ZhZh2013b} by other arguments (see Proposition 4.3 and Proposition 6.3). The other relations are instead new to the best of our 
knowledge. Moreover, the case \eqref{item:equiv_pq} {cannot} be extended to $a=0$, since in this case the spaces $\calh_{0, p}$ and $\calh_{0,p-q}$ have empty 
intersection unless $q=p$.
Finally, we remark that the general case $a\neq a'$, $p\neq q$ can be obtained by combining the two cases \eqref{item:equiv} and \eqref{item:equiv_pq} of 
Corollary \ref{cor:norm_equiv}.
\end{remark}

We point out that \eqref{eq:inner_product} implies that the monomials are orthogonal in $\calh_{a,p}$, i.e., for all $\zeta, \beta\in I_a(p,d)$ we have
\begin{equation}\label{eq:monomial_inner}
\inner{\calh_{a,p}}{\frac{x^\zeta}{\zeta!}, \frac{x^\beta}{\beta!}} = \frac{1}{w_\zeta^a} \delta_{\zeta\beta}.
\end{equation}
From this fact we may also deduce that the norm inequality \eqref{eq:norm_equiv} is sharp for all $0<a'\leq a$. Indeed, the right inequality in 
\eqref{eq:norm_equiv} is an equality for $f(x):=x^\zeta$ with $|\zeta|=p$, since in this case we have by \eqref{eq:monomial_inner} that 
\begin{equation*}
\Norm{\calh_{a,p}}{x^\zeta}^2
= \frac{(\zeta!)^2}{w_\zeta^a}
= \frac{\zeta!}{p!} 
=\frac{(\zeta!)^2}{w_\zeta^{a'}}
=\Norm{\calh_{a',p}}{x^\zeta}^2, 
\end{equation*}
where we used the definition \eqref{eq:def_w} of $w_\zeta^a$ and $w_\zeta^{a'}$. 
Similarly, the left inequality in \eqref{eq:norm_equiv} is met for $|\zeta|=0$, i.e. $x^\zeta=1$ and $\zeta!=1$, since in this case we have again by 
\eqref{eq:monomial_inner} that
\begin{equation*}
\Norm{\calh_{a,p}}{x^\zeta}^2
= \frac{1}{w_\zeta^a}
= \frac{1}{a^p} 
= \left(\frac{a'}{a}\right)^p \frac{1}{w_\zeta^{a'}}
= \left(\frac{a'}{a}\right)^p\Norm{\calh_{a',p}}{x^\zeta}^2. 
\end{equation*}
In particular, maximal-degree monomials have the same norm independently of $a>0$, while lower degree monomials have an increasingly large norm as $a\to 0$,  
up 
to not even being elements of $\calh_{a,p}$ in the limiting case, since indeed $\calh_{0,p}$ is the homogeneous space. In this sense, the parameter $a\geq 0$ 
has a regularizing effect, promoting high degree components in a minimal norm solution of the interpolation problem.

For \eqref{eq:norm_equiv_pq}, similar arguments prove that the equality is obtained for $|\zeta|=0$ (the lower bound), and $|\zeta|=q$ (the upper 
bound). It thus happens that if $p\geq q$, the elements of $\calh_{a,q}$ have an $\calh_{a,p}$-norm which increases with a factor 
between $a^{(q-p)/2}$ and $a^{(q-p)/2} \binom{p}{p-q}^{1/2}$. In particular, low-degree and high-degree monomials have norm that are increasingly 
separated, and thus also increasing $p$ has a regularization effect, and the monomials $x^{\zeta}$ have minimal norm in $\calh_{a,p}$ with $p=|\zeta|$.

\subsection{Stability}\label{sec:stability} 
We now derive a simple stability results for the interpolation map $I_{X,a,p}:C(\Omega) \to V_{a,p}(X)$, as a function of a set $X\subset \Omega$ of 
$\calh_{a,p}$-unisolvent points.
We recall that the Lebesgue function associated to the interpolation process is defined as
\begin{equation}\label{eq:lebesgue_fun}
    \lambda_{X,a,p}(x):=\sup\limits_{0\neq f\in V_{a,p}(X)}\frac{|f(x)|}{\;\Norm{\infty}{f_{|X}}},\;\;x\in \Omega,
\end{equation}
such that one obtains the stability bound
\begin{equation}\label{eq:lebesgue_fun_bound}
|I_{X,a,p} f(x)|
\leq \lambda_{X,a,p}(x) \Norm{\infty}{f_{|X}}
\leq \lambda_{X,a,p}(x) \Norm{L_\infty(\Omega)}{f}
\;\; \fa f\in C(\Omega), x\in \Omega,
\end{equation}
which can be also written in terms of the associated Lebesgue constant $\Lambda_{X,a,p}:=\Norm{L_\infty(\Omega)}{\lambda_{X,a,p}}$ as
\begin{equation}\label{eq:stab_bound} 
\Norm{L_\infty(\Omega)}{I_{X,a,p} f} 
\leq \Lambda_{X,a,p} \Norm{\infty}{f_{|X}}
\leq \Lambda_{X,a,p} \Norm{L_\infty(\Omega)}{f}
\;\; \fa f\in C(\Omega).
\end{equation}
Moreover, the fact that $X$ is unisolvent ensures the existence of a Lagrange basis $\left\{\ell_{i,a,p}\right\}_{i=1}^N$ of $V_{a,p}(X)$, which gives
\begin{equation}\label{eq:cardinal_interpolant}
I_{X,a,p} f(x) = \sum_{i=1}^N f(x_i) \ell_{i, a,p}(x), \;\;x\in\Omega,
\end{equation}
and $\lambda_{X,a,p}(x)=\sum_{i=1}^N \left|\ell_{i,a,p}(x)\right|$.

Although we are still not able to obtain explicit bounds on $\lambda_{X,a,p}$ and $\Lambda_{X,a,p}$, we can prove the following result.

\begin{theorem}\label{thm:stability}
Let $B\subset\R^d$ be a set and let $X\subset\Omega$ be a set of $\pol_p^d$-unisolvent points. Let 
\begin{equation}
\Lambda_{X}^{pol}:=\sup\limits_{0\neq f\in \pol_p^d(B)}\frac{ \Norm{L_\infty(B)}{f}}{\;\Norm{\infty}{f_{|X}}}
\end{equation}
be the Lebesgue constant for polynomial interpolation of degree $p$ on $X$.

Let $a\geq 0$, $p\in\N$, and let $X$ be $\calh_{a,p}$-unisolvent. For any $X_{M_a}\supset X$ which is $\pol_p^d$-unisolvent, it holds
\begin{equation*}
\Norm{L_\infty(\Omega)}{I_{X,a,p} f} 
\leq \Lambda_{X_{M_a}}^{pol} \Norm{L_\infty(B)}{f}
\;\; \fa f\in C(B),
\end{equation*}
where $B\subset\R^d$ is any set which contains $X_{M_a}$.
\end{theorem}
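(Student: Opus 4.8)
The plan is to use that the interpolant is itself a polynomial and then to compare the kernel process on $X$ with ordinary polynomial interpolation on the unisolvent superset $X_{M_a}$.

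First I would record that $s:=I_{X,a,p}f$ lies in $V_{a,p}(X)\subseteq\calh_{a,p}(\Omega)$, which by Theorem~\ref{thm:ns_characterization} equals $\pol_p^d(\Omega)$ for $a>0$ and $\hpol_p^d(\Omega)$ for $a=0$; thus $s$ is a polynomial of degree at most $p$ that matches $f$ on the nodes $X$. Since $X\subseteq X_{M_a}\subseteq B$, and since $X_{M_a}$ is $\pol_p^d$-unisolvent, I can feed the polynomial $s$ into the very definition of the polynomial Lebesgue constant to obtain (taking $B\supseteq\Omega$, or enlarging $B$ to contain $\Omega\cup X_{M_a}$)
\[
\Norm{L_\infty(\Omega)}{s}\le\Norm{L_\infty(B)}{s}\le\Lambda_{X_{M_a}}^{pol}\,\Norm{\infty}{s_{|X_{M_a}}}.
\]
The whole statement then reduces to the single estimate $\Norm{\infty}{s_{|X_{M_a}}}\le\Norm{L_\infty(B)}{f}$.

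At the nodes belonging to $X$ this is immediate, since there $s(x_i)=f(x_i)$ and hence $|s(x_i)|\le\Norm{L_\infty(B)}{f}$; in particular, in the minimal case $N=M_a$ one has $X_{M_a}=X$ and the argument is already complete, recovering the ordinary polynomial Lebesgue bound consistently with Corollary~\ref{cor:minimal_p}. The step I expect to be the genuine obstacle is the control of $s$ at the completion nodes $X_{M_a}\setminus X$, where $s$ is not tied to $f$. A pointwise argument looks hopeless, because the cardinal functions $\ell_{i,a,p}$ of $V_{a,p}(X)$ may be large away from $X$; instead I would exploit the variational characterization of $V_{a,p}(X)$, namely that $s$ is the element of $\pol_p^d$ of least $\calh_{a,p}$-norm agreeing with $f$ on $X$, equivalently $s\perp_{\calh_{a,p}}\{q\in\pol_p^d:q_{|X}=0\}$. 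Using the explicit inner product \eqref{eq:inner_product} together with the reproducing identity \eqref{eq:kernel_der}, I would try to convert this orthogonality into a bound on the values $s(x_j)$, $x_j\in X_{M_a}\setminus X$, purely in terms of $f_{|X}$.

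An alternative route, which I would develop in parallel, avoids the completion nodes altogether: combining the Lagrange representation \eqref{eq:cardinal_interpolant} with \eqref{eq:stab_bound} gives $\Norm{L_\infty(\Omega)}{s}\le\Lambda_{X,a,p}\Norm{L_\infty(B)}{f}$, so it would suffice to prove the comparison $\Lambda_{X,a,p}\le\Lambda_{X_{M_a}}^{pol}$ of the kernel and polynomial Lebesgue constants. I expect this comparison to be the crux and to be far from automatic: the soft inclusion $V_{a,p}(X)\subseteq\pol_p^d$ only yields $\Norm{L_\infty(B)}{g}\le\Lambda_{X_{M_a}}^{pol}\Norm{\infty}{g_{|X_{M_a}}}$ for $g\in V_{a,p}(X)$, and the ratio $\Norm{\infty}{g_{|X_{M_a}}}/\Norm{\infty}{g_{|X}}$ can exceed one, so the unisolvency of $X_{M_a}$ and the minimal-norm geometry of $V_{a,p}(X)$ must be used together rather than separately. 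Testing the reduction on a single node in one dimension with large $p$ is, I think, the quickest way to gauge how tight this comparison is and thereby to guide the full argument.
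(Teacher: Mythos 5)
Your opening reduction --- $s:=I_{X,a,p}f\in\calh_{a,p}=\pol_p^d$, apply the definition of $\Lambda_{X_{M_a}}^{pol}$ to $s$, and reduce the claim to $\Norm{\infty}{s_{|X_{M_a}}}\leq\Norm{L_\infty(B)}{f}$ --- is in fact the paper's \emph{entire} proof: it asserts that the result ``simply follows'' from $I_{X,a,p}f\in\pol_p^d$ and the definition of $\Lambda_{X_{M_a}}^{pol}$, with no discussion of the values of $s$ at the completion nodes $X_{M_a}\setminus X$. The step you flag as the genuine obstacle is therefore exactly the step the paper skips. Since you do not carry out either of your two strategies (the orthogonality argument at the completion nodes, or the comparison $\Lambda_{X,a,p}\leq\Lambda_{X_{M_a}}^{pol}$), your proposal is not a complete proof; but your diagnosis of where the difficulty lies is more careful than the paper's own argument.

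Moreover, your suspicion that the missing step is ``far from automatic'' understates the situation: it fails, and with it the theorem in the stated generality. Take $d=1$, $\Omega=B=[-1,1]$, $a=1$, $X=\{1/2\}$, $f\equiv 1$. Then $V_{1,p}(X)$ is spanned by $k_{1,p}(\cdot,1/2)=(1+x/2)^p$, so $I_{X,1,p}f(x)=(1+x/2)^p/(5/4)^p$ and $\Norm{L_\infty(\Omega)}{I_{X,1,p}f}=(6/5)^p=\Lambda_{X,1,p}$. For $p$ divisible by $3$ the Chebyshev--Lobatto points of $[-1,1]$ contain $\cos(\pi/3)=1/2$, so they form a $\pol_p^1$-unisolvent completion $X_{M_a}\subset B$ with $\Lambda_{X_{M_a}}^{pol}=O(\log p)$, and the claimed inequality $(6/5)^p\leq\Lambda_{X_{M_a}}^{pol}$ already fails at $p=6$. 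Hence $\max_{X_{M_a}}|s|$ can genuinely exceed $\Norm{L_\infty(B)}{f}$, the comparison $\Lambda_{X,a,p}\leq\Lambda_{X_{M_a}}^{pol}$ does not hold for arbitrary unisolvent supersets, and no completion of your plan (or of the paper's sketch) can succeed without additional hypotheses linking $X$ to $X_{M_a}$ --- e.g.\ $N=M_a$, the case you correctly note is already settled by Corollary \ref{cor:minimal_p}.
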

\begin{proof}
The results simply follows from the fact that $I_{X,a,p} f\in \calh_{a,p}=\pol_{p}^d$ and by the definition of $\Lambda_{X_{M_a}}^{pol}$. Such $X_{M_a}$ exists 
thanks to Proposition \ref{prop:unisolvent_points}, but since it needs not to be contained in $\Omega$ we consider an enclosing set $B$. 
\end{proof}

This results shows that polynomial kernel interpolation on $X$ is at least as stable as polynomial interpolation on any $X_{M_a}\supset X$. This points 
to the fact that understanding how to complete a set of points to a set of polynomially unisolvent points of small Lebesgue constant may be highly relevant in 
this context. The fact is also related to the possible minimality of the construction in Theorem \ref{thm:existence} (see also Remark 
\ref{rem:comments_on_existence}). Moreover, although choosing $X_{M_a}\setminus X_N$ outside of $\Omega$ may possibly lead to a smaller Lebesgue constant, one 
pays the price of obtaining a bound in terms of the norm $\Norm{L_\infty(B)}{f}$ computed on a larger set $B\supset \Omega$, which may possibly be significantly 
larger than $\Norm{L_\infty(\Omega)}{f}$.

On the other hand, if one is free to choose $N$ points to sample a function to construct a $k_{a,p}$ interpolant, this result suggests that it could be a good 
idea to select them from a $\pol_p^d$-unisolvent set with small Lebesgue constant.

\subsection{Error estimation}
As it is typically the case in kernel interpolation, we start by assuming that $f\in\calh_{a,p}$. In this case, we recall 
that the interpolant as a map $I_{X,a,p}:\calh_{a,p}\to V_{a,p}(X)$ coincides with the $\calh_{a,p}$-orthogonal 
projection onto $V_{a,p}(X)$. The norm of the associated error operator is the power function $P_{X,a,p}(x)$ defined by
\begin{equation}\label{eq:pf_def}
P_{X,a,p}(x)
:=\sup\limits_{0\neq f\in \calh_{a, p}}\frac{\left|f(x) - I_{X,a,p} f(x)\right|}{\Norm{\calh_{a,p}}{f}}.
\end{equation}
By definition, the interpolation error can be controlled as
\begin{equation}\label{eq:pf_bound}
\left|f(x) - I_{X,a,p} f(x)\right|\leq P_{X,a,p}(x){\Norm{\calh_{a,p}}{f}}\;\; \fa f\in\calh_{a,p}.  
\end{equation}
This bound allows one to separate the error in a term depending only on $f$ and one depending only on $X$, $\Omega$ and $k_{a,p}$. 

In particular, worst-case error bounds in $\calh_{a,p}$ can be derived by obtaining uniform bounds on $P_{X,a,p}$ in terms of the fill distance
\begin{equation*}
h_X:=\sup\limits_{x\in\Omega}\min\limits_{y\in X} \|x-y\|.
\end{equation*}
We refer to Chapter 11 in \cite{Wendland2005} for details on this approach. 

We are not able to obtain bounds of this type yet, and we rather use the power function to outline a method 
to derive more general error bounds.
Indeed, as mentioned in Section \ref{sec:stability} the interpolant $I_{X,a,p} f$ is well defined also for any $f\in C(\Omega)$, even for $f\notin \calh$, 
since its computation requires only the knowledge of $f_{|X}$, and it is of interest to study the resulting approximation error also in this case. We have the 
following result.

\begin{proposition}\label{prop:error}
Let $X_N\subset\Omega$ be $\calh_{a,p}$-unisolvent. Then for all $f\in C(\Omega)$ we have
\begin{equation}\label{eq:error}
\left|(f-I_{X,a,p} f)(x)\right| 
\leq \left(1 + \lambda_{X,a,p}(x)\right)\left\|f-f_p^\star\right\|_{L_\infty(\Omega)} + P_{X,a,p}(x) \left\|f^\star - I_{X,a,p} 
f_p^\star\right\|_{\calh_{a,p}}, \;\;x\in\Omega, 
\end{equation}
where 
\begin{equation*}
f_p^\star:=\inf\limits_{g\in \pol_p^d(\Omega)}\Norm{L_\infty(\Omega)}{f-g}
\end{equation*}
is the uniform best polynomial approximant of $f$.

\end{proposition}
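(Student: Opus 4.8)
The plan is to follow the standard ``escaping the native space'' argument (as in \cite{Narcowich2006}): split the error into a part measured in the uniform norm and a part living in $\calh_{a,p}$, and treat each with the stability tools already established. Since $f_p^\star\in\pol_p^d(\Omega)=\calh_{a,p}$ (for $a>0$), linearity of $I_{X,a,p}$ gives the decomposition
\[
f - I_{X,a,p} f = (f - f_p^\star) - I_{X,a,p}(f - f_p^\star) + (f_p^\star - I_{X,a,p} f_p^\star).
\]
Evaluating at $x\in\Omega$ and applying the triangle inequality reduces the claim to bounding the three scalar terms on the right.

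The first two terms are handled by the Lebesgue-function estimate \eqref{eq:lebesgue_fun_bound}. The pointwise bound $|(f-f_p^\star)(x)|\le\|f-f_p^\star\|_{L_\infty(\Omega)}$ is immediate, while applying \eqref{eq:lebesgue_fun_bound} to $g:=f-f_p^\star\in C(\Omega)$ yields $|I_{X,a,p}(f-f_p^\star)(x)|\le\lambda_{X,a,p}(x)\,\|f-f_p^\star\|_{L_\infty(\Omega)}$. Together these produce the factor $(1+\lambda_{X,a,p}(x))$ multiplying $\|f-f_p^\star\|_{L_\infty(\Omega)}$, matching the first summand of \eqref{eq:error}.

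The third term is where the power function enters, and it is the only step that is not bookkeeping. A direct application of \eqref{eq:pf_bound} to $g=f_p^\star$ would produce the factor $\|f_p^\star\|_{\calh_{a,p}}$, which is \emph{not} what the statement claims. Instead I would use that $I_{X,a,p}$, restricted to $\calh_{a,p}$, is the $\calh_{a,p}$-orthogonal projection onto $V_{a,p}(X)$, so the error operator $E:=\mathrm{Id}-I_{X,a,p}$ is idempotent. Concretely, set $r:=f_p^\star - I_{X,a,p}f_p^\star=E f_p^\star$; since $r$ vanishes on $X$ (the interpolant matches $f_p^\star$ there), one has $I_{X,a,p}r=0$, hence $r=r-I_{X,a,p}r=E r$. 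Applying the definition \eqref{eq:pf_def} of the power function to $r\in\calh_{a,p}$ then gives
\[
|(f_p^\star - I_{X,a,p} f_p^\star)(x)| = |(r - I_{X,a,p} r)(x)| \le P_{X,a,p}(x)\,\|r\|_{\calh_{a,p}} = P_{X,a,p}(x)\,\|f_p^\star - I_{X,a,p} f_p^\star\|_{\calh_{a,p}},
\]
which is exactly the second summand of \eqref{eq:error}. Summing the three bounds completes the proof.

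The main obstacle is precisely this idempotency observation: recognizing that the power function must be applied to the \emph{residual} $f_p^\star - I_{X,a,p} f_p^\star$ rather than to $f_p^\star$ itself, so that the sharper native-space norm of the residual appears in the final estimate. I would also flag that the argument uses $f_p^\star\in\calh_{a,p}$, which holds for $a>0$ with $\calh_{a,p}=\pol_p^d(\Omega)$; for $a=0$ one should take $f_p^\star$ to be the best uniform approximant from $\hpol_p^d(\Omega)=\calh_{0,p}$ for the same reasoning to go through.
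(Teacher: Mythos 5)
Your proof is correct and follows essentially the same route as the paper: the same three-term decomposition (your $-I_{X,a,p}(f-f_p^\star)$ is the paper's $I_{X,a,p}f_p^\star - I_{X,a,p}f$), the same Lebesgue-function bound for the uniform-norm part, and the same power-function bound for the native-space part. Your explicit justification of why the residual norm $\Norm{\calh_{a,p}}{f_p^\star - I_{X,a,p}f_p^\star}$ (rather than $\Norm{\calh_{a,p}}{f_p^\star}$) appears --- namely that $f_p^\star - I_{X,a,p}f_p^\star$ vanishes on $X$, so \eqref{eq:pf_bound} can be applied to the residual itself --- is a detail the paper's proof glosses over, and it is correctly argued.
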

\begin{proof}
We have 
\begin{equation}\label{eq:add_subtr}
f-I_{X,a,p} f = f - f_p^\star + f_p^\star - I_{X,a,p} f_p^\star + I_{X,a,p} f_p^\star - I_{X,a,p} f.
\end{equation}
Using the form 
\eqref{eq:cardinal_interpolant} of the interpolant, a standard argument gives
\begin{align}\label{eq:tmp_bound_one}
|(I_{X,a,p} f_p^\star - I_{X,a,p} f)(x)|
&=|I_{X,a,p} (f_p^\star - f)(x)|
= \sum_{i=1}^N \left|\ell_{i,a,p}(x)\right| |(f_p^\star - f)(x_i)| \leq \lambda_{X,a,p}(x) \max_{1\leq i\leq N} |(f_p^\star-f)(x_i)|\\
&\leq \lambda_{X,a,p}(x) \left\|f^\star-f\right\|_{L_\infty(\Omega)}\nonumber.
\end{align}
Moreover, since $f_p^\star\in \calh_{a,p}=\pol_{p}^d$ by definition, we can apply the standard power function bound \eqref{eq:pf_bound} and get
\begin{equation}\label{eq:tmp_bound_two}
|(f_p^\star - I_{X,a,p} f_p^\star)(x)| \leq P_{X,a,p}(x) \left\|f_p^\star - I_{X,a,p} f_p^\star\right\|_{\calh_{a,p}}.
\end{equation}
Inserting these two bounds in \eqref{eq:add_subtr} we get
\begin{align*}
|(f-I_{X,a,p} f)(x)|
&\leq  |(f - f_p^\star)(x)| + |(f_p^\star - I_{X,a,p} f_p^\star)(x)| + |(I_{X,a,p} f_p^\star - I_{X,a,p} f)(x)|\\
&\leq  \Norm{L_\infty(\Omega)}{f - f_p^\star} + |(f_p^\star - I_{X,a,p} f_p^\star)(x)| + |(I_{X,a,p} f_p^\star - I_{X,a,p} f)(x)|\\
&\leq \left(1 + \lambda_{X,a,p}(x)\right) \left\|f - f_p^\star\right\|_{L_\infty(\Omega)} + P_{X,a,p}(x) \left\|f_p^\star - I_{X,a,p} 
f_p^\star\right\|_{\calh_{a,p}},
\end{align*}
which is the bound of the statement.
\end{proof}
This result shows that it is possible to obtain error bounds for interpolation of functions outside of the native space (thus escaping the native space 
\cite{Narcowich2006}, or working in the so-called misspecified setting \cite{Kanagawa2016b, Kanagawa2019}). 

Observe also that in a sense inequality \eqref{eq:error} is sharp with respect to the relation between polynomial interpolation 
in $\pol_p^d$ and kernel interpolation in $\calh_{a,p}$. Indeed, on one hand if $f\in \calh_{a,p}=\pol_p^d$ then $f-f_p^\star=0$, and thus \eqref{eq:error} 
reduces to the usual power function bound \eqref{eq:pf_bound} for kernel interpolation. On the other hand, if instead $N=M_a$ then $I_{X,a,p} f$ coincides with 
the polynomial interpolant of $f$ (see Corollary \ref{cor:minimal_p}). It follows that $f-I_{X,a,p} f = 0$ for all $f\in \calh_{a,p}=\pol_p^d$, and thus 
$P_{X,a,p}=0$ (see \eqref{eq:pf_def}). In this case \eqref{eq:error} reduces to the standard Lebesgue function bound for polynomial interpolation.

Between these two limits, the proposition suggests as well that one may try to optimize the set $X_{M_a}$ and its subset $X_N$, in order to balance the 
contribution of the two terms in \eqref{eq:error}. 

Moreover, we have the following.
\begin{corollary}
Under the assumptions of Proposition \ref{prop:error}, let $X_{M_a}$ with $X_N\subset X_{M_a}\subset B\subset \R^d$ be any $\pol_p^d$-unisolvent set. Then
\begin{equation}\label{eq:error_vs_pol}
\left|(f-I_{X,a,p} f)(x)\right| 
\leq \left(1 + \lambda_{X_{M_a}}^{pol}(x)\right)\left(\left\|f-f_p^\star\right\|_{L_\infty(B)} + \max\limits_{x\in X_{M_a}}P_{X,a,p}(x) 
\left\|f^\star - I_{X,a,p} f_p^\star\right\|_{\calh_{a,p}}\right), \;\;x\in B, 
\end{equation}
where $\lambda_{X_{M_a}}^{pol}(x)$ is the Lebesgue function for polynomial interpolation.
\end{corollary}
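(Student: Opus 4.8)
The plan is to exploit the fact, established in Theorem \ref{thm:ns_characterization}, that $\calh_{a,p}=\pol_p^d$ for $a>0$, so that the kernel interpolant $I_{X,a,p}f$ is itself a polynomial of degree $p$. This structural feature is what lets me replace the kernel-dependent quantities $\lambda_{X,a,p}$ and $P_{X,a,p}$ evaluated at a general $x\in B$ by the polynomial Lebesgue function $\lambda_{X_{M_a}}^{pol}$ of the enclosing unisolvent completion, together with the maximum of the power function over the nodes of $X_{M_a}$. Comparing the target bound \eqref{eq:error_vs_pol} with Proposition \ref{prop:error}, the two summands correspond term by term, so it suffices to carry out these two conversions.

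For the power-function term I would argue as follows. For any $g\in\calh_{a,p}=\pol_p^d$ the difference $g-I_{X,a,p}g$ lies in $\pol_p^d$ and vanishes on $X_N$; since $X_{M_a}\supset X_N$ is $\pol_p^d$-unisolvent, polynomial interpolation on $X_{M_a}$ reproduces it, giving $|(g-I_{X,a,p}g)(x)|\leq\lambda_{X_{M_a}}^{pol}(x)\max_{\xi\in X_{M_a}}|(g-I_{X,a,p}g)(\xi)|$ for every $x\in B$. Bounding each nodal value by the power function in the form \eqref{eq:tmp_bound_two}, dividing by $\Norm{\calh_{a,p}}{g-I_{X,a,p}g}$ and taking the supremum over $g$ yields the clean comparison $P_{X,a,p}(x)\leq\lambda_{X_{M_a}}^{pol}(x)\max_{\xi\in X_{M_a}}P_{X,a,p}(\xi)$. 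Applied to $g=f_p^\star$, which lies in $\calh_{a,p}$ by definition, this reproduces exactly the second summand of \eqref{eq:error_vs_pol}.

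For the first term I would split the error through the polynomial interpolant $\Pi f$ of $f$ on $X_{M_a}$, writing $f-I_{X,a,p}f=(f-\Pi f)+(\Pi f-I_{X,a,p}f)$ and using that $I_{X,a,p}f\in\pol_p^d$. The first piece is the classical polynomial interpolation error on $X_{M_a}$, which the standard Lebesgue argument (subtracting $f_p^\star=\Pi f_p^\star$) bounds by $(1+\lambda_{X_{M_a}}^{pol}(x))\Norm{L_\infty(B)}{f-f_p^\star}$. The second piece is again a polynomial in $\pol_p^d$ vanishing on $X_N$, since both $\Pi f$ and $I_{X,a,p}f$ interpolate $f$ there, hence controlled by $\lambda_{X_{M_a}}^{pol}(x)\max_{\xi\in X_{M_a}}|(f-I_{X,a,p}f)(\xi)|$, where I used $\Pi f(\xi)=f(\xi)$ at the nodes.

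The main obstacle is the bound on $\max_{\xi\in X_{M_a}}|(f-I_{X,a,p}f)(\xi)|$, namely the kernel interpolation error at the auxiliary nodes $X_{M_a}\setminus X_N$, since at the nodes of $X_N$ it vanishes. Using linearity through $f-I_{X,a,p}f=(f_p^\star-I_{X,a,p}f_p^\star)+\big((f-f_p^\star)-I_{X,a,p}(f-f_p^\star)\big)$, the native-space part is controlled at each node by $P_{X,a,p}(\xi)\Norm{\calh_{a,p}}{f_p^\star-I_{X,a,p}f_p^\star}$, which feeds back into the power-function term already treated. The delicate contribution is the residual part $I_{X,a,p}(f-f_p^\star)(\xi)$: a priori its size at an auxiliary node is governed by the kernel Lebesgue function $\lambda_{X,a,p}(\xi)$, which is \emph{not} dominated by the polynomial Lebesgue function of $X_{M_a}$ in general. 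I expect the crux to be precisely the control of this term, either by absorbing it into the factor $\Norm{L_\infty(B)}{f-f_p^\star}$ already present or by exploiting structure of the chosen completion $X_{M_a}$, and this is the step I would scrutinize most carefully before asserting the stated inequality.
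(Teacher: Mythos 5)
Your reduction of the corollary to two substitutions is the right frame, and your treatment of the power-function term is exactly the paper's: since $f_p^\star - I_{X,a,p}f_p^\star\in\pol_p^d$, the polynomial Lebesgue bound on $X_{M_a}$ followed by the nodal power-function bound gives $|(f_p^\star - I_{X,a,p}f_p^\star)(x)|\le\lambda^{pol}_{X_{M_a}}(x)\max_{\xi\in X_{M_a}}P_{X,a,p}(\xi)\,\Norm{\calh_{a,p}}{f_p^\star - I_{X,a,p}f_p^\star}$, which is precisely how the paper replaces \eqref{eq:tmp_bound_two}. Where you diverge is the other summand: the paper does not introduce the polynomial interpolant $\Pi f$ on $X_{M_a}$ at all. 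It keeps the three-term splitting \eqref{eq:add_subtr} verbatim and only replaces \eqref{eq:tmp_bound_one} by $|I_{X,a,p}(f_p^\star-f)(x)|\le\lambda^{pol}_{X_{M_a}}(x)\Norm{L_\infty(B)}{f_p^\star-f}$, justified by ``the same argument as in Theorem \ref{thm:stability}'', i.e., by the fact that $I_{X,a,p}(f_p^\star-f)$ is itself an element of $\pol_p^d$. Your detour through $\Pi f$ forces you to estimate $f - I_{X,a,p}f$ at the auxiliary nodes of $X_{M_a}$, which generates extra terms that do not appear in \eqref{eq:error_vs_pol} and is why your bookkeeping does not close.

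That said, the obstacle you flag at the end is real and is not removed by the paper's shortcut: applying the polynomial Lebesgue bound to $g:=I_{X,a,p}(f_p^\star-f)\in\pol_p^d$ gives $|g(x)|\le\lambda^{pol}_{X_{M_a}}(x)\max_{\xi\in X_{M_a}}|g(\xi)|$, and at the nodes $\xi\in X_{M_a}\setminus X_N$ the value $g(\xi)$ is governed by the kernel Lebesgue function $\lambda_{X,a,p}(\xi)$ rather than by $\Norm{L_\infty(B)}{f_p^\star-f}$ alone; as you note, there is no general domination of $\lambda_{X,a,p}$ by $\lambda^{pol}_{X_{M_a}}$. So the step you say you would ``scrutinize most carefully'' is exactly the step the paper delegates to Theorem \ref{thm:stability}, whose one-line proof is silent on the same point. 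As it stands your proposal is therefore incomplete --- you do not actually derive the stated inequality --- but you have correctly isolated the one nontrivial ingredient. To match the paper's proof you should keep the decomposition of Proposition \ref{prop:error} and invoke Theorem \ref{thm:stability} (taking its stability bound as given) for the term $I_{X,a,p}(f_p^\star-f)$, rather than re-deriving it through $\Pi f$ and the nodal values of the kernel error.
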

\begin{proof}
We follow the same steps as in the proofs of Proposition \ref{prop:error}. Instead of \eqref{eq:tmp_bound_one}, since $f_p^\star - I_{X,a,p} 
f_p^\star\in\pol_p^d$ we have 
\begin{equation*}
|(I_{X,a,p} f_p^\star - I_{X,a,p} f)(x)|
\leq \lambda_{X_{M_a},a,p}(x) \left\|f^\star-f\right\|_{L_\infty(B)},
\end{equation*}
with the same argument as in Theorem \ref{thm:stability}. Moreover, again because $f_p^\star - I_{X,a,p} f_p^\star\in\pol_p^d$ we have 
\begin{equation*}
|(f_p^\star - I_{X,a,p} f_p^\star)(x)| 
\leq \lambda_{X_{M_a}}^{pol}(x)\Norm{\infty}{(f_p^\star - I_{X,a,p} f_p^\star)_{|X_{M_a}}},
\end{equation*}
and from this we can proceed to replace \eqref{eq:tmp_bound_two} with
\begin{equation*}
|(f_p^\star - I_{X,a,p} f_p^\star)(x)| 
\leq \lambda_{X_{M_a}}^{pol}(x) \max\limits_{x\in X_{M_a}} P_{X,a,p}(x) \left\|f_p^\star - I_{X,a,p} f_p^\star\right\|_{\calh_{a,p}}.
\end{equation*}
Inserting these two bounds in \eqref{eq:add_subtr} we get the result.
\end{proof}
Without the term involving the power function, inequality \eqref{eq:error_vs_pol}
is the error bound for polynomial interpolation of $f$ on $X_{M_a}$, and we thus have that the error of kernel interpolation on $X$ is comparable with 
the error of polynomial interpolation on $X_{M_a}$. To quantify this relation it would be sufficient to prove bounds on $\max_{x\in 
X_{M_a}}P_{X,a,p}(x)$, which should be expected to be much easier to bound than $\Norm{L_\infty(\Omega)}{P_{X,a,p}}$ and with a smaller value, provided the 
points $X_{M_a}$ are not too far from $\Omega$. Moreover, one may expect that such a bound depends on the relation between $X_N$ and $X_{M_a}$, instead of on 
$h_X$.

\begin{remark}
Other approaches may be followed to obtain error bounds for interpolation with the polynomial kernels. Most notably, one can use the zero lemma 
of \cite{Narcowich2004}. Namely, for $\tau:=k+s$ with $k\in\N$, $k>d/2$, $s\in (0,1]$, and $1\leq t\leq \infty$, we denote as $W_t^\tau(\Omega)$ the 
$L_t(\Omega)$-Sobolev space of fractional smoothness $\tau$. Theorem 2.12 in \cite{Narcowich2004} proves that if $\Omega$ has a sufficiently smooth boundary, 
then 
there is a constant $C_k$ depending on $k$ such that for any $1\leq r\leq \infty$ and for any $f\in W_t^\tau(\Omega)$ it holds
\begin{equation*}
\Norm{W_r^\tau(\Omega)}{f - I_{X,a,p} f}
\leq C_k h_X^{\tau - d(1/t-1/r)_+} \seminorm{W_t^\tau(\Omega)}{f - I_{X,a,p} f},
\end{equation*}
where $(x)_+:=\max(x, 0)$. 
Taking in particular $\tau=k+s>p$ gives thus
\begin{equation*}
\Norm{W_r^\tau(\Omega)}{f - I_{X,a,p} f}
\leq 
C_k h_X^{\tau - d(1/t-1/r)_+} \seminorm{W_t^\tau(\Omega)}{f},
\end{equation*}
where $\seminorm{W_t^\tau(\Omega)}{I_{X,a,p} f} =0$ since $\tau>p$.
However, $p$ needs to increase with $N$ (see the beginning of Section \ref{sec:native_and_error}), and thus $\tau=k+s>p$ is possible only if $k$ is itself 
increasing. Since $C_k$ is increasing with $k$, to use this approach one would need to work out explicitly the growth of $C_k$ in terms of $p$, with a small as 
possible $p$. 
A similar approach has already been followed in \cite{Zwicknagl2009}, but in that case the kernels are strictly positive definite, so one does not need to 
change the kernel depending on the points.
\end{remark}

\section{Stable computations}\label{sec:maybe_rbf_qr}
As we will demonstrate numerically in Section \ref{sec:numerics}, the computation of a $k_{a,p}$-interpolant may be significantly unstable from a computational 
point of view, even if we proved in Section \ref{sec:stability} that the Lebesgue constant of this interpolant can be growing quite slowly, for example 
with a rate comparable to that of polynomial interpolation (see Theorem \ref{thm:stability}).

The fact that the interpolation process is provably stable even if its actual computation is unstable has been observed and studied thoroughly in kernel 
interpolation \cite{DeMarchiSchaback2008,DeMarchi2010}. Moreover, this discrepancy has been attributed to the use of the so-called direct method, i.e., the 
inversion of the kernel matrix, and several stable algorithm has been introduced to overcome this problem. 

In particular, given parameters $p\in\N$, $a\geq 0$ and a set of $\calh_{a,p}$-unisolvent points $X\subset \Omega$, in this section we show how to apply 
the RBF-QR algorithm to the polynomial kernels to construct a stable basis $\{u_j\}_{j=1}^N$ of $V_{a,p}(X)$. We follow Section 4.2 of \cite{Fornberg2011} and 
Section 4.1 in \cite{Fasshauer2012b}, where in our case the monomial basis plays the role of the Mercer basis of the kernel.

To this end we define an arbitrary but fixed ordering $\{\zeta^{(i)}\}_{i=1}^{M_a}$ of 
the multiindices $I_a(p,d)$ and construct the diagonal matrix $D\in\R^{M_a\times M_a}$ as 
in Lemma \ref{lemma:a_as_vdv}. Using the same ordering of the monomials, we split $D$ as 
\begin{equation}\label{eq:split_d}
D=\begin{bmatrix}D_1 &0 \\ 0 &D_2\end{bmatrix},
\end{equation}
now with $D_1\in \R^{N\times N}$ and $D_2\in \R^{(M_a-N)\times(M_a-N)}$. Observe in particular that the multi-indices may be sorted 
in such a way that the diagonal entries of $D$ are sorted in non-increasing order, so that $D_1$ contains the large entries, and $D_2$ the small ones that are 
likely the cause of the instability that we observed in the numerical inversion of $A$. Observe that, in case of equality among values of the diagonal, 
multiple possible ordering are possible. We are not investigating this aspect here, and just assume that an arbitrary valid order is fixed.

We then {assemble} the Vandermonde matrix $V\in \R^{N\times M_a}$ associated to the same monomial ordering and compute the QR decomposition $V=Q R$, with 
$Q\in\R^{N\times N}$ an orthogonal matrix, and where the matrix $R\in\R^{N\times M_a}$ is splitted as 
\begin{equation}\label{eq:split_r}
R:=[R_1|R_2], \;\; R_1\in \R^{N\times N},\;\; R_2\in \R^{N\times(M_a-N)}.
\end{equation}
It follows from Lemma \ref{lemma:a_as_vdv} that 
\begin{equation}\label{eq:A_V_C}
A 
= V D V^T
= V D R^T Q^T
= V \begin{bmatrix}D_1 &0 \\ 0 &D_2\end{bmatrix} \begin{bmatrix}R_1^T\\R_2^T\end{bmatrix} Q^T  
= V \begin{bmatrix}D_1 R_1^T \\ D_2 R_2^T\end{bmatrix} Q^T
= V \begin{bmatrix}I \\ D_2 R_2^T R_1^{-T} D_1^{-1}\end{bmatrix} D_1 R_1^T Q^T,
\end{equation}
where $D_1 R_1^T $ is invertible because $D$ is invertible and $V$, and thus $R$, have full rank since the points $X$ are unisolvent (see Proposition 
\ref{prop:unisolvent_points}).

We now recall that for an arbitrary basis $\{u_j\}_{j=1}^N$ of $V_{a,p}(X)$, if $C_u\in\R^{N\times N}$ is the matrix of 
change of basis from $\{k_{a,p}(\cdot, x_i)\}_{i=1}^N$ to this new basis, and $V_{u}:= \left[u_j(x_i)\right]_{i,j=1}^N$, then \cite{Pazouki2011} shows that it 
holds $A = V_u C_u^{-1}$. We may interpret the decomposition \eqref{eq:A_V_C} in these terms, and assume that $C_u^{-1}:=D_1 R_1^T Q^T$ is the inverse of the  
matrix of change of basis from the stable basis to the kernel basis of translates. With this definition, observe that we also have from \eqref{eq:A_V_C} that
\begin{align*}
V_u 
= A C_u 
= V \begin{bmatrix}I \\ D_2 R_2^T R_1^{-T} D_1^{-1}\end{bmatrix}
= V C_u',
\end{align*}
where now
\begin{equation}\label{eq:c_phi}
C_u':= \begin{bmatrix}I \\ D_2 R_2^T R_1^{-T} D_1^{-1}\end{bmatrix}\in\R^{M_a\times N}
\end{equation}
is a rectangular matrix that expresses the new basis in terms of the monomial basis. This can thus be used to 
express the stable basis without the need of passing through the unstable kernel basis. Observe moreover that $D_2 R_2^T R_1^{-T} 
D_1^{-1}$ is an $(M_a-N)\times N$ matrix, and that its computation (and thus the computation of $C_u$), requires only the inverses of the 
$N\times N$ matrices $R_1$ and $D_1$, which can be computed efficiently since $R_1$ is triangular and $D_1$ is diagonal.

We can thus work directly in terms of this new stable basis, both to solve the linear system and to evaluate the interpolant as
\begin{equation*}
I_{a,p,X}(x) = \sum_{j=1}^N (c_u)_j u_j(x), \;\;x\in\Omega,
\end{equation*}
for a suitable vector of coefficients $c_u\in\R^N$.
The computation of the 
interpolant is summarized in Algorithm \ref{alg:rbfqr_train}.
\begin{algorithm}[h!t]
  \caption{Construction of the interpolant}
  \begin{algorithmic}[1]
      \State{Input: Parameters $p\in\N$, $a\geq 0$, $\calh_{a,p}$-unisolvent interpolation points $X:=\{x_i\}_{i=1}^N\subset\R^d$ and target values $y\in\R^N$.}
      \State{Define an ordering $I_u:=\{\zeta^{(i)}\}_{i=1}^{M_a}$ of the multiindices $I_a(p,d)$.}
      \State{Evaluate the Vandermonde matrix $V\in\R^{N\times M_a}$ associated to the points $X$ and the given ordering (see Lemma \ref{lemma:a_as_vdv}).}
      \State{Compute the QR decomposition $Q R = V$, and define $R_1,R_2$ as in \eqref{eq:split_r}.}
      \State{Evaluate the diagonal matrix $D\in\R^{M_a\times M_a}$ (see Lemma \ref{lemma:a_as_vdv}) and define $D_1,D_2$ as in \eqref{eq:split_d}.}
      \State{Compute $C_u'$ as in \eqref{eq:c_phi}.}
      \State{Compute the coefficients $c_u:= C_u^{-1} y$.}
     \State{Output: $C_u'$, $c_u$, $I_u$.}
    \end{algorithmic}\label{alg:rbfqr_train}
\end{algorithm}

Once the matrix $C_u'$, the vector $c_u$, and the ordering $I_u$ are computed, they can be used to evaluate the interpolant on any set 
$X_{eval}:=\{x'_i\}_{i=1}^{N_{eval}}\subset \R^d$ of $N_{eval}\in\N$ evaluation points, again by using the corresponding Vandermonde matrix. We describe this 
process in  Algorithm \ref{alg:rbfqr_test}, which returns the vector $y_{eval}$ with $(y_{eval})_i:=I_{a,p,X}(x'_i)$.

\begin{algorithm}[ht!]
  \caption{Evaluation of the interpolant}
  \begin{algorithmic}[1]
      \State{Input: Evaluation points $X_{eval}:=\{x'_i\}_{i=1}^{N_{eval}}\subset\R^d$, $N_{eval}\in\N$, matrix $C_u'$, vector $c_u$, and indices $I_u$ from 
Algorithm 
\ref{alg:rbfqr_train}.}
      \State{Evaluate the Vandermonde matrix $V\in\R^{N_{eval}\times M_a}$ (see Lemma \ref{lemma:a_as_vdv}) associated to the points $X_{eval}$ and the 
ordering $I_u$.}
      \State{Evaluate the stable basis on $X_{eval}$ as $V_u:= V C_u'$.}
      \State{Evaluate the interpolant {on $X_{eval}$} as $y_{eval} = V_u c_u$.}
     \State{Output: $y_{eval}$.}
    \end{algorithmic}\label{alg:rbfqr_test}
\end{algorithm}

We remark that this approach works exactly in the same way to interpolate vector-valued functions $f:\R^d\to \R^{d'}$, $d'\in\N$. In this case $y\in 
\R^{N\times d'}$ collects the evaluations of $f$ on $X$ rowwise, and the output $c_u$ of Algorithm \ref{alg:rbfqr_train} is a matrix $c_u\in \R^{N\times d'}$. 
In particular, this approach can also be used to compute the Lagrange basis functions \eqref{eq:cardinal_interpolant} by a single run of Algorithm 
\ref{alg:rbfqr_train}, simply by defining $d':=N$ and $y:=I\in \R^{N\times N}$. The resulting output $y_{eval}\in \R^{N_{eval}\times N}$ of Algorithm 
\ref{alg:rbfqr_test} contains as columns the Lagrange functions evaluated on $X_{eval}$ (see Remark 13.8 in \cite{Fasshauer2015}).

\begin{remark}\label{rem:rbf_qr_better}
The RBF-QR algorithm includes also a series of further ad-hoc optimizations that we are not {discussing} here for simplicity. 
Moreover, {the stable basis} obtained by this process is expressed in terms of the monomial basis, and it is thus clearly a polynomial basis. 
Similar approaches based on QR decompositions are used for point selection in polynomial interpolation, such as the Approximate Fekete Points (AFP) of 
\cite{Bos2010}. In particular, the matrices $Q$ and $V_u$ should be related to some sort of orthogonal polynomials, which could be interesting to investigate 
to 
further improve the stability of the algorithm.
\end{remark}

\section{Numerical experiments}\label{sec:numerics}
We test now numerically some aspects that were discussed in the previous sections. For simplicity we restrict to $d=1$ and set $\Omega=[-1, 1]$, so 
that \eqref{eq:index_sets} and \eqref{eq:index_sets_dim} give $I_a(p,1) = \{0, 1, \dots, p\}$ and $M_a = p + 1$ if $a>0$, while $I_0(p,1) = \{p\}$ 
and $M_0 = 1$. Moreover \eqref{eq:expansion_kernel} and \eqref{eq:expansion_kernel_hom} simplify to
\begin{equation*}
k_{a,p}(x, y) = \sum_{\zeta=0}^{p} \binom{p}{\zeta} a^{p-\zeta} (xy)^\zeta,
\;\;\;\;
k_{0,p}(x, y) = (xy)^p.
\end{equation*}
Some examples of the values of $k_{a,p}(\cdot, 1/2)$ are visualized in Figure~\ref{fig:kernel_viz}.

\begin{figure}[h!t]
\begin{center}
\includegraphics[width=\textwidth]{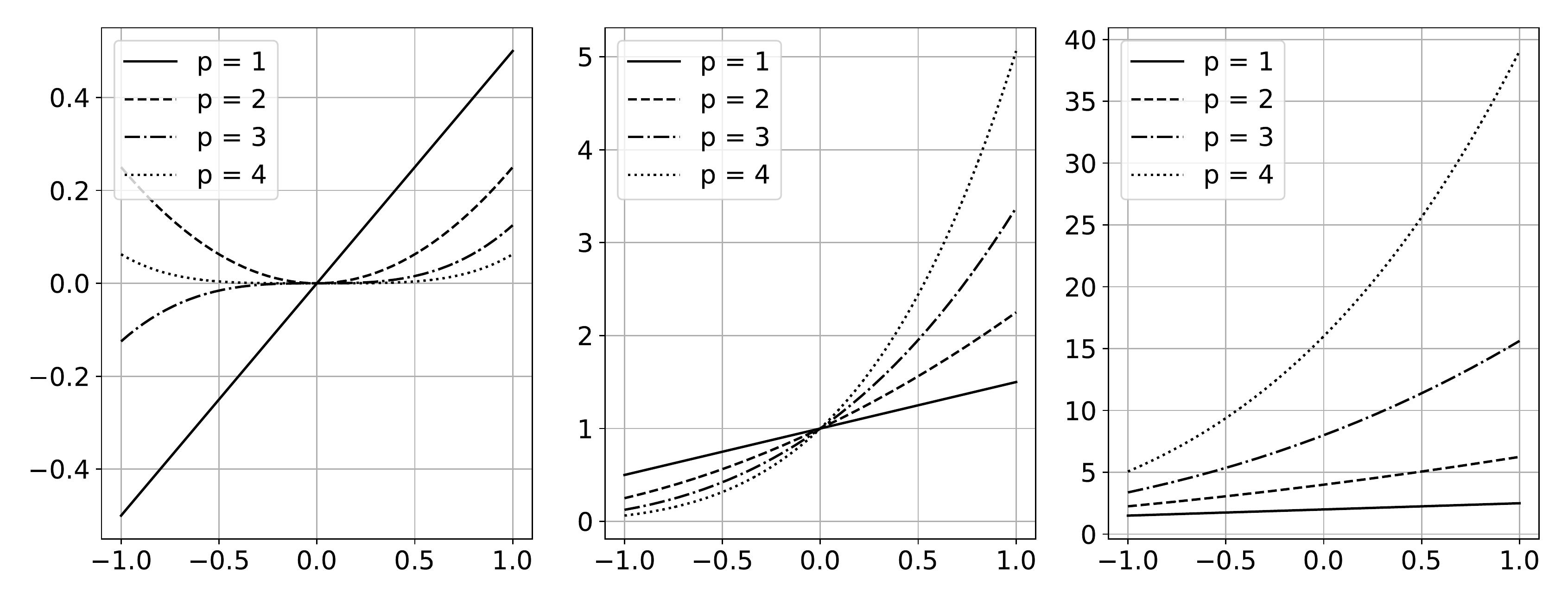}
\end{center}

\caption{Values of the kernel $k_{a,p}(\cdot, 1/2)$ on $[-1,1]$ for $p\in\{1,2,3,4\}$ and for $a=0$ (left), $a=1$ (center), and $a=2$ (right).}
\label{fig:kernel_viz}
\end{figure}

\subsection{Convergence of the interpolant and stable computations}\label{sec:numerics_conv} 
We start by comparing kernel interpolation with polynomial interpolation, and demonstrate the potential instability of the direct method and the 
effectiveness of the RBF-QR approach. 

We recall that $M_a=p+1$ for $d=1$, and thus the constraint $N\leq M_a$ means that we need to require $p\geq N-1$. Moreover, for any $p\in\N$ any set of 
pairwise distinct points is the subset of a set of $\pol_p^1$-unisolvent points, and thus in view of Proposition \ref{prop:unisolvent_points} we can solve 
interpolation problems with any set of $N$ pairwise distinct points provided that $p\geq N-1$.

As an example we interpolate the smooth function $f(x) := \cos(10 x)$ sampled at $N$ Chebyshev points with $N=5, \dots, 50$. For each $N$ we test polynomial 
interpolation, and kernel interpolation with $p=p(N)\in\{N-1, N+1, N+3, N+5\}$ and $a\in\{5, 10\}$. 

We report in Figure \ref{fig:1d_convergence} the corresponding maximum absolute errors with respect to the exact values of $f$, computed on a grid of 
$N_{eval}=1000$ equally spaced points. It is remarkable to observe that 
the direct approach (first row of Figure \ref{fig:1d_convergence}) fails to compute a converging interpolant even for small values of $N$ and even if the 
corresponding polynomial interpolant is stable and convergent. On the other hand, switching to RBF-QR (second row of Figure \ref{fig:1d_convergence}) resolves 
this instability, and for all the tested values 
of $p$ and $a$ the kernel interpolants converge at the same speed of the polynomial interpolant.
However, also in this stable case the convergence saturates at an error of roughly $10^{-12}$ at $N=30$, and from there on the stability seems to slightly 
degrade, up to some 
small oscillations for large $N$. It is possible that the adoption of further optimizations, as discussed in Remark \ref{rem:rbf_qr_better}, 
may lead to an even stronger stability.

\begin{figure}
\begin{center}
\begin{tabular}{cc}
\includegraphics[width=.45\textwidth]{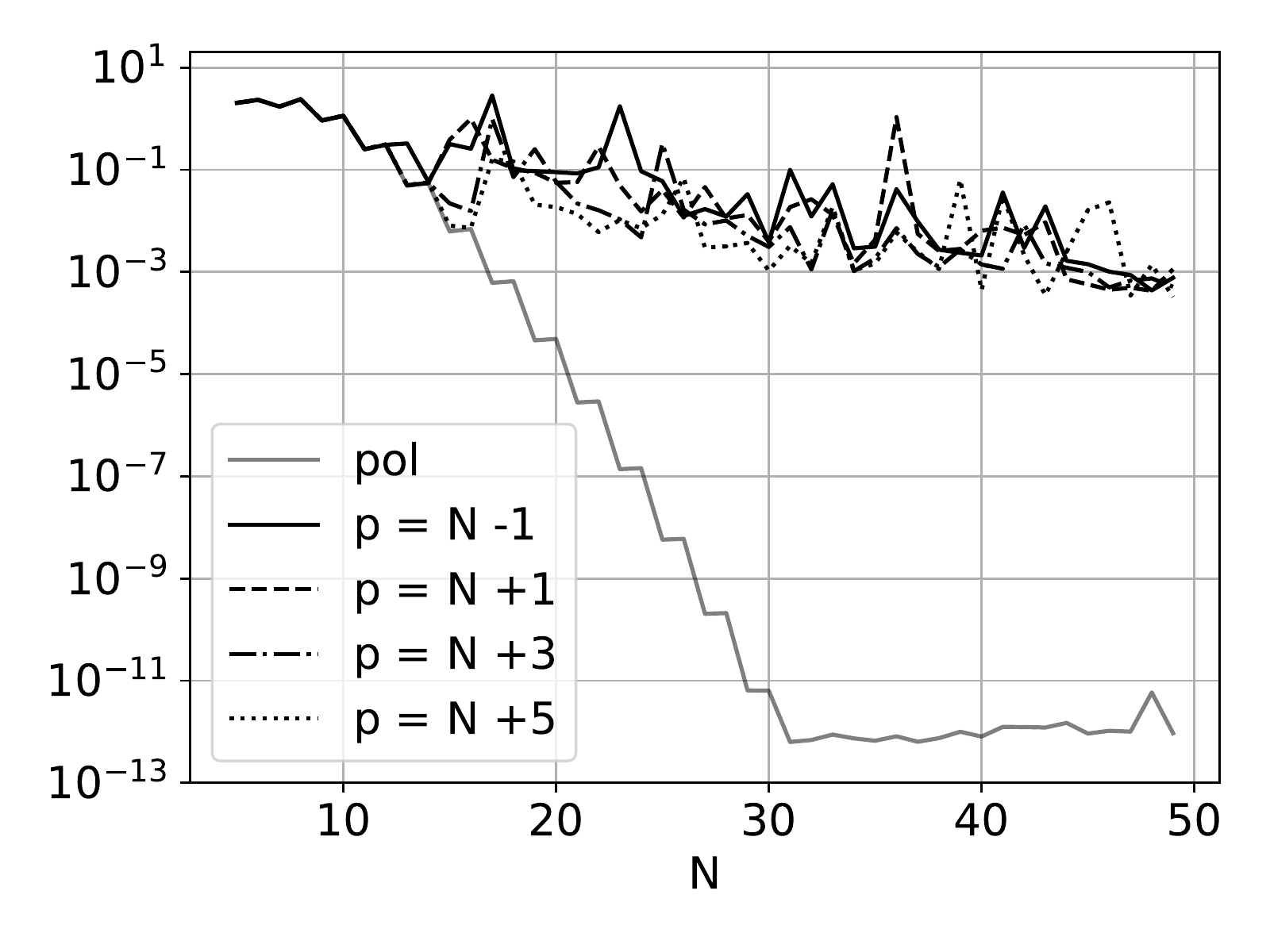}
&\includegraphics[width=.45\textwidth]{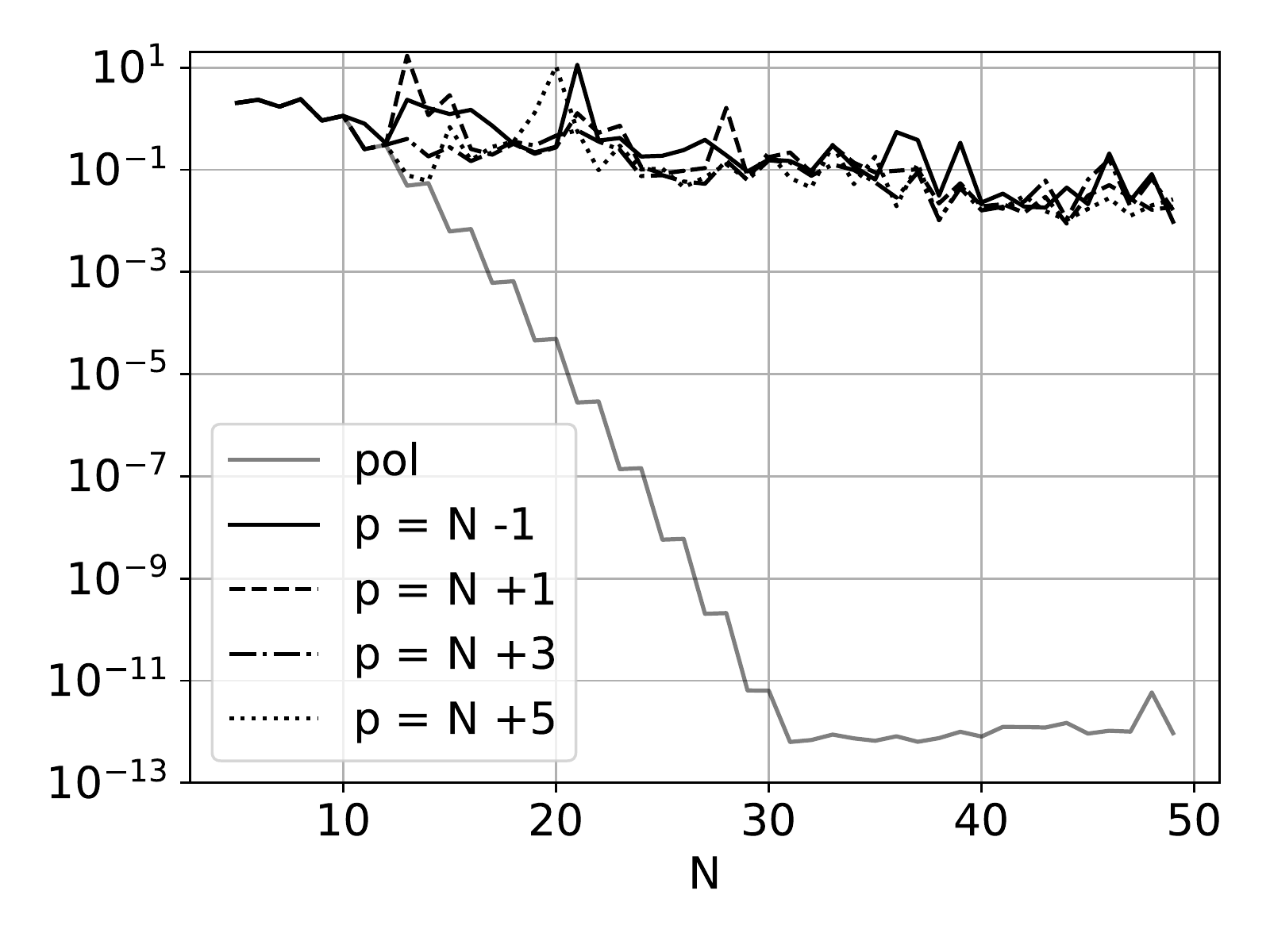}
\\
\includegraphics[width=.45\textwidth]{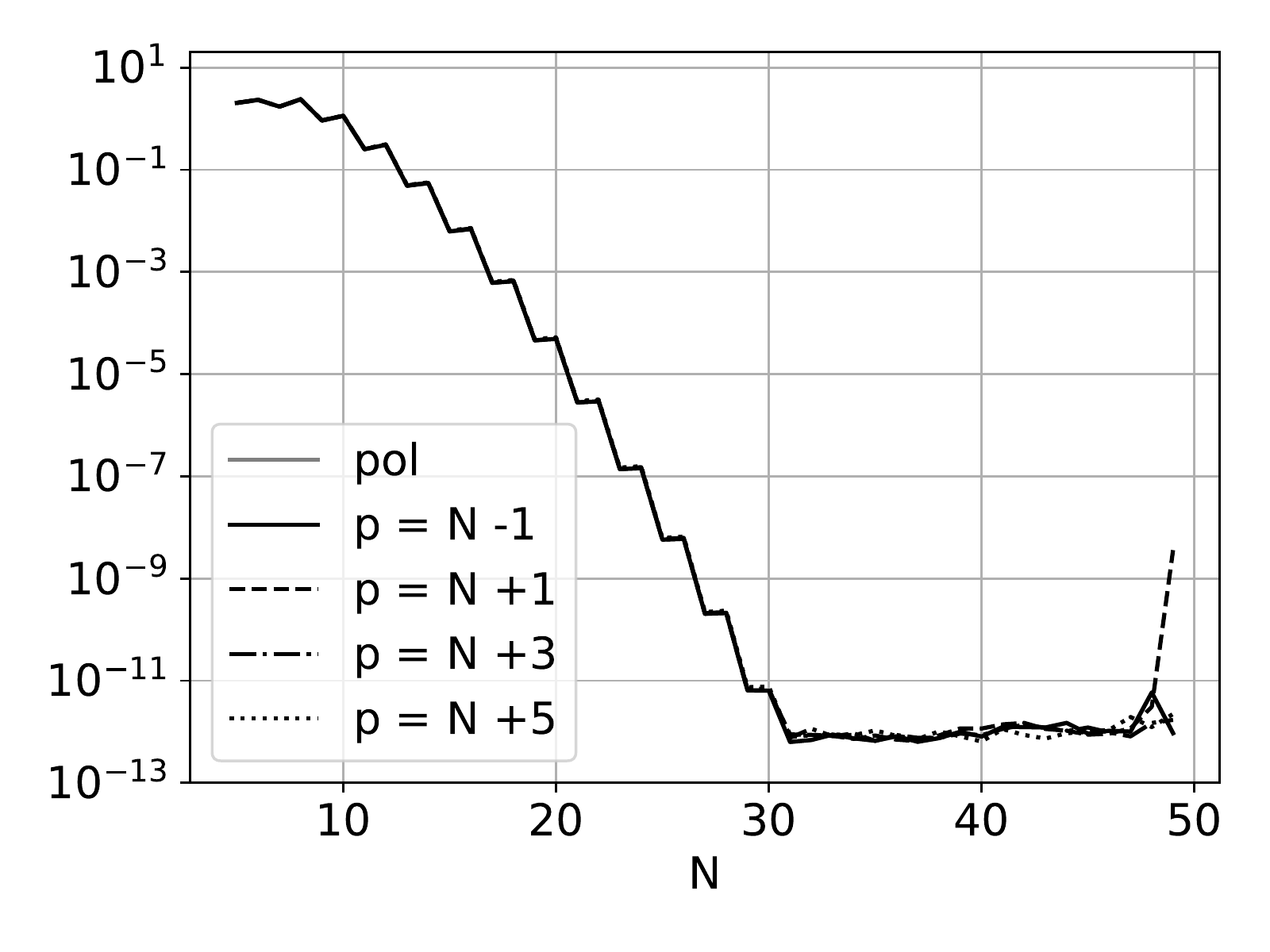}
&\includegraphics[width=.45\textwidth]{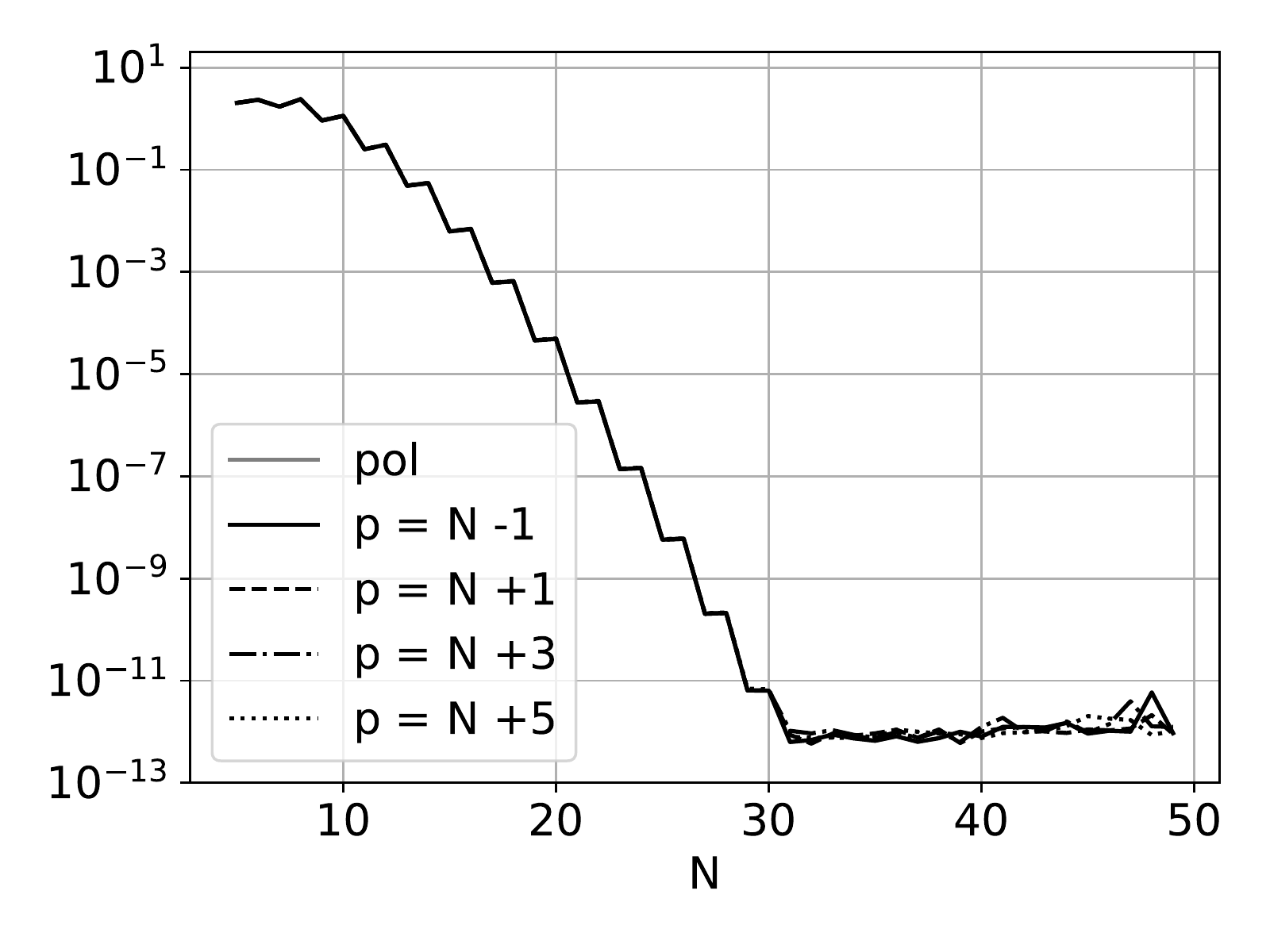}
\end{tabular}
\end{center}

\caption{Convergence of the maximal absolute error of interpolation of the function $f(x) = \cos(10 x)$ using $N=5, \dots, 50$ Chebyshev points. For each 
figure, we test a polynomial interpolant (gray line), and kernel interpolants with various values of $p$, and $a=5$ (left column) and $a=10$ (right column). 
The kernel interpolants are computed with the direct method (first row) and with RBF-QR (second row).}
\label{fig:1d_convergence}
\end{figure}

\subsection{Lagrange functions and Lebesgue constant}
We now consider the stability aspects of the interpolant, analysing the associated Lagrange functions and Lebesgue constant. 

We first show that RBF-QR is indeed effective also in computing these cardinal functions. As an example, Figure \ref{fig:lagrange_qr} shows the Lagrange 
functions of $k_{10, 25}$ corresponding to $N=15$ Chebyshev points. Even for this small number of points, it is immediately clear that a stable algorithm is 
needed to have an accurate computation.
\begin{figure}[h!t]
\begin{center}
\includegraphics[width=\textwidth]{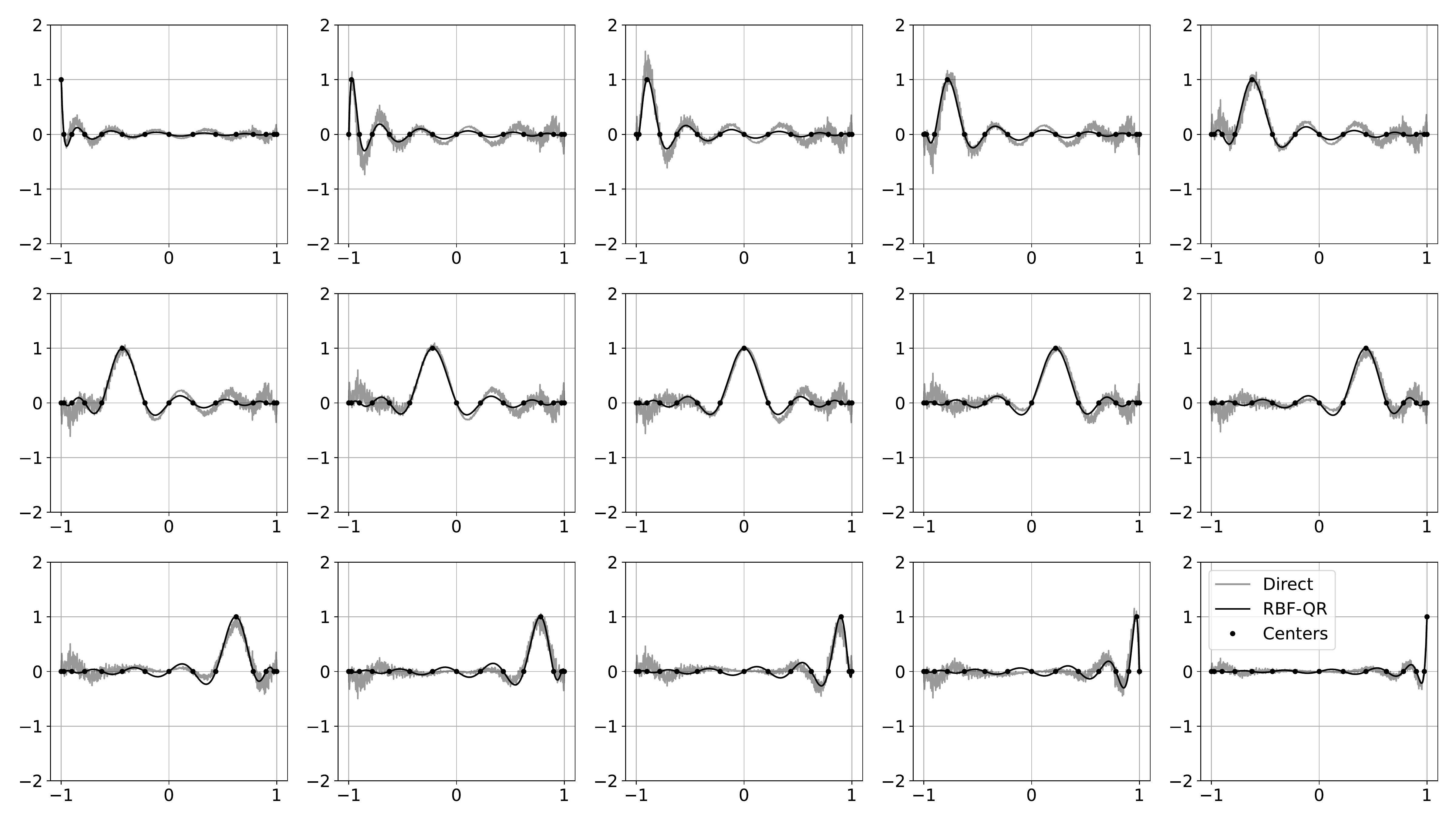}
\end{center}

\caption{Lagrange functions for the polynomial kernel with $p=25$ and $a=10$, and corresponding to $N=15$ Chebyshev points (black dots), computed 
with the direct method (gray lines) and with RBF-QR (black lines).}
\label{fig:lagrange_qr}
\end{figure}

We use these stably computed functions to evaluate the Lebesgue function and the Lebesgue constant and compare it with the ones of polynomial interpolation. 
As an example, for $N=5$ Chebyshev points we consider the polynomial kernel $k_{a,p}$ with $p\in\{N-1, N+9, N+19, N+29\}$ and $a=5$. The first row in Figure 
\ref{fig:lagr_and_leb} shows the Lagrange functions for the different values of $p$, and it is clear that an increase of $p$ has the effect of smoothing the 
oscillations in the interior of the domain and enlarging the oscillations close to the boundary. This reflects into a Lebesgue function (left panel in Figure 
\ref{fig:lagr_and_leb}) which is decreasing in the interior and increasing close to the boundary as $p$ increases. This behavior causes the Lebesgue function 
(right panel in Figure \ref{fig:lagr_and_leb}) to be initially decreasing and then increasing. In particular, the minimum value for this set of points and 
parameter $a$ is reached for $p>N-1$, i.e., there exists a polynomial kernel with a Lebesgue constant which is strictly smaller than that of the polynomial 
interpolant.

\begin{figure}[h!t]
\begin{center}
\begin{tabular}{cc}
\multicolumn{2}{c}{\includegraphics[width=.92\textwidth]{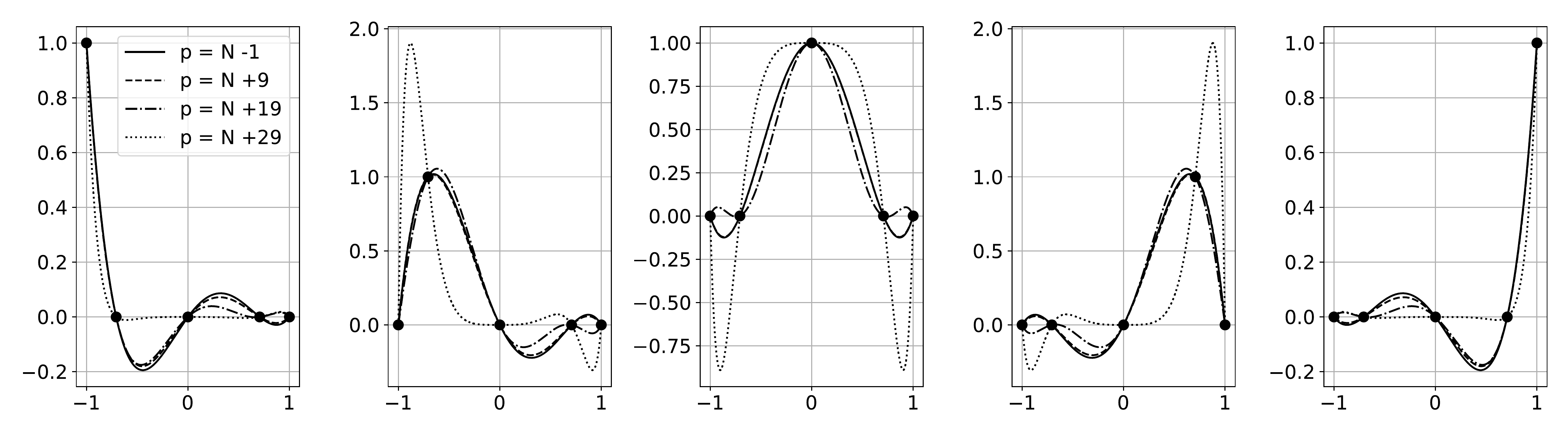}}
\\
\includegraphics[width=.45\textwidth]{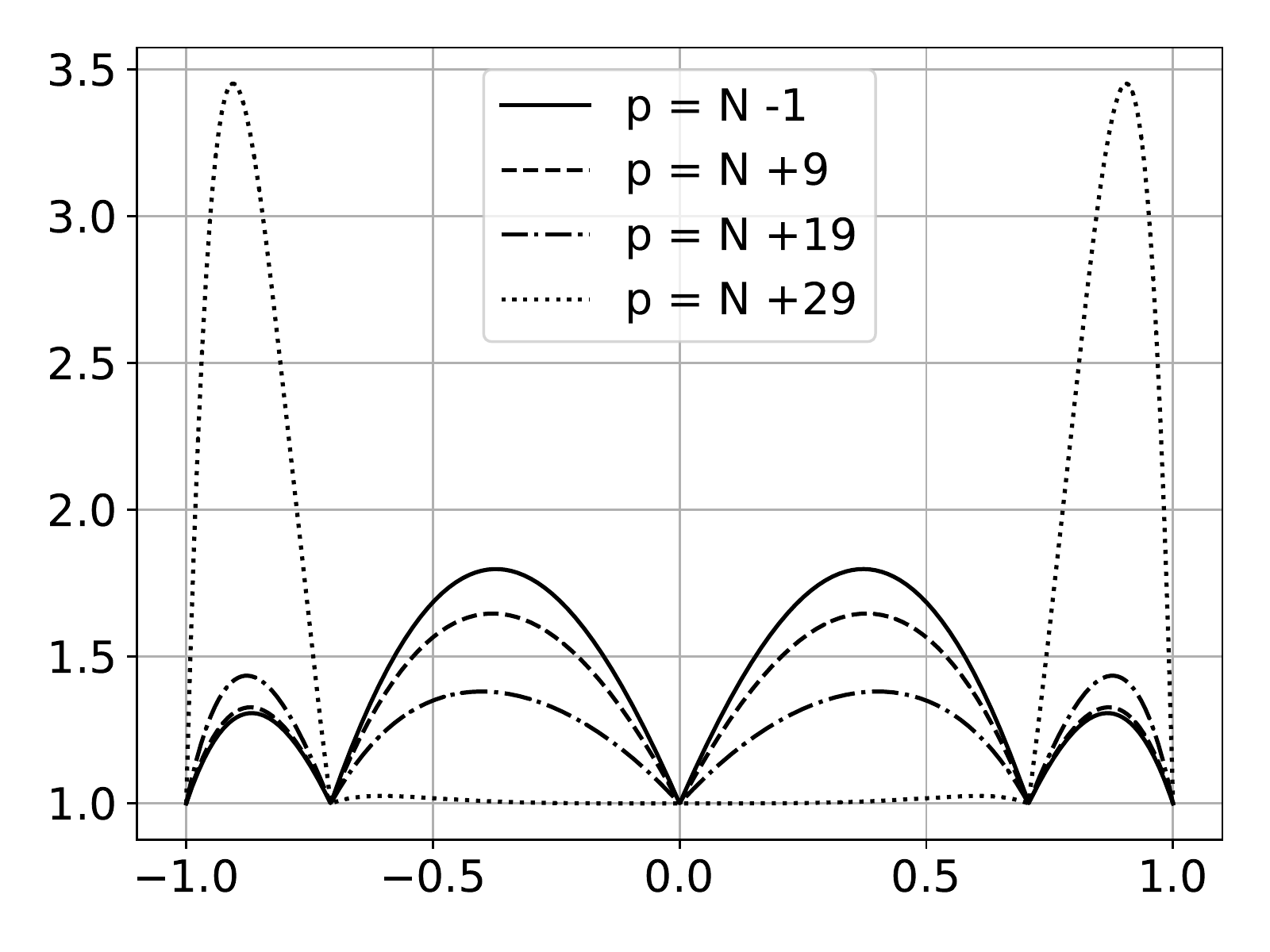}
&\includegraphics[width=.45\textwidth]{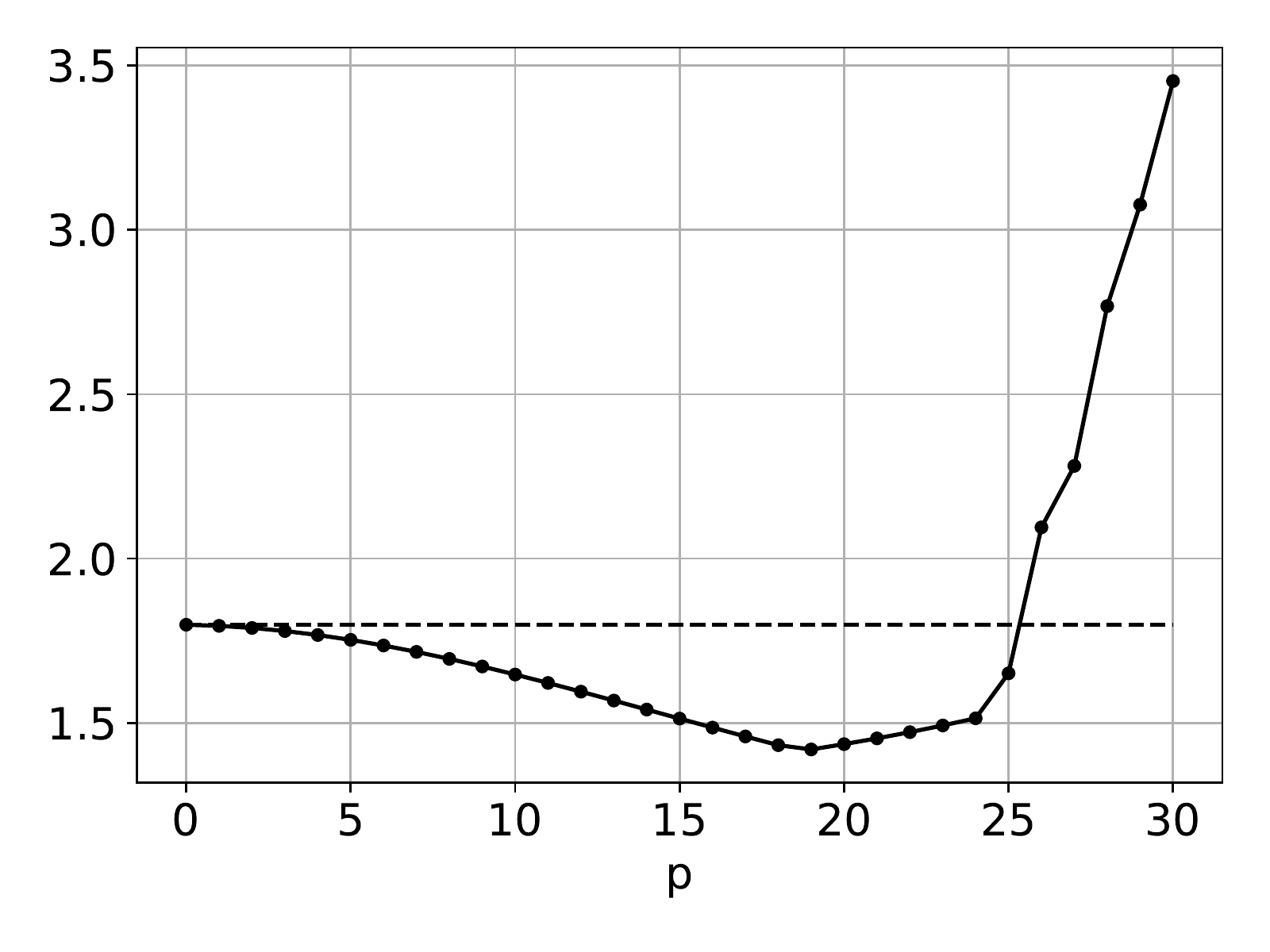}
\end{tabular}
\end{center}

\caption{Lagrange functions (top), Lebesgue function (bottom left), and Lebesgue constant (bottom right) for interpolation with a polynomial kernel $k_{5, p}$ 
on $N=5$ Chebyshev points and various values of $p$, as reported in each panel.
}
\label{fig:lagr_and_leb}
\end{figure}

To further investigate the stability of the interpolants we look at the asymptotic behavior of the Lebesgue constant. We compare the same kernels $k_{a,p}$ 
used 
in the previous section, but now testing both equally spaced and Chebyshev points, for $N=5, \dots, 45$. We restrict to a maximal 
$N=45$ because the same 
instability in the computations observed in Section \ref{sec:numerics_conv} appears here for large $N$, up to making the results completely unreliable for 
$N\approx 50$. The results are reported in 
Figure \ref{fig:1d_lebesgue}. It is clear that in all cases the growth of the Lebesgue constant coincides with that of polynomial interpolation (i.e., 
$p=N-1$), 
and thus it has the well known logarithmic growth for Chebyshev points (left panels in Figure \ref{fig:1d_lebesgue}), and exponential growth for equally spaced 
points (right panels in Figure \ref{fig:1d_lebesgue}). It is important to notice that the growth seems to be not affected by the value of $a$, and especially 
not even by that of $p$. This latter fact is relevant because it implies that, at least for $d=1$, the value of $\Lambda_{X, a, p}$ depends on $X$ and not on 
$X_{M_a}$, and especially it seems that the bound of Theorem \ref{thm:stability} is quite pessimistic. Moreover, it seems that the difference between the 
values of the Lebesgue constants of polynomial and kernel interpolation observed in Figure \ref{fig:lagr_and_leb} (bottom right) are not so significant for 
growing $N$.

\begin{figure}[h!t]
\begin{center}
\begin{tabular}{cc}
\includegraphics[width=.45\textwidth]{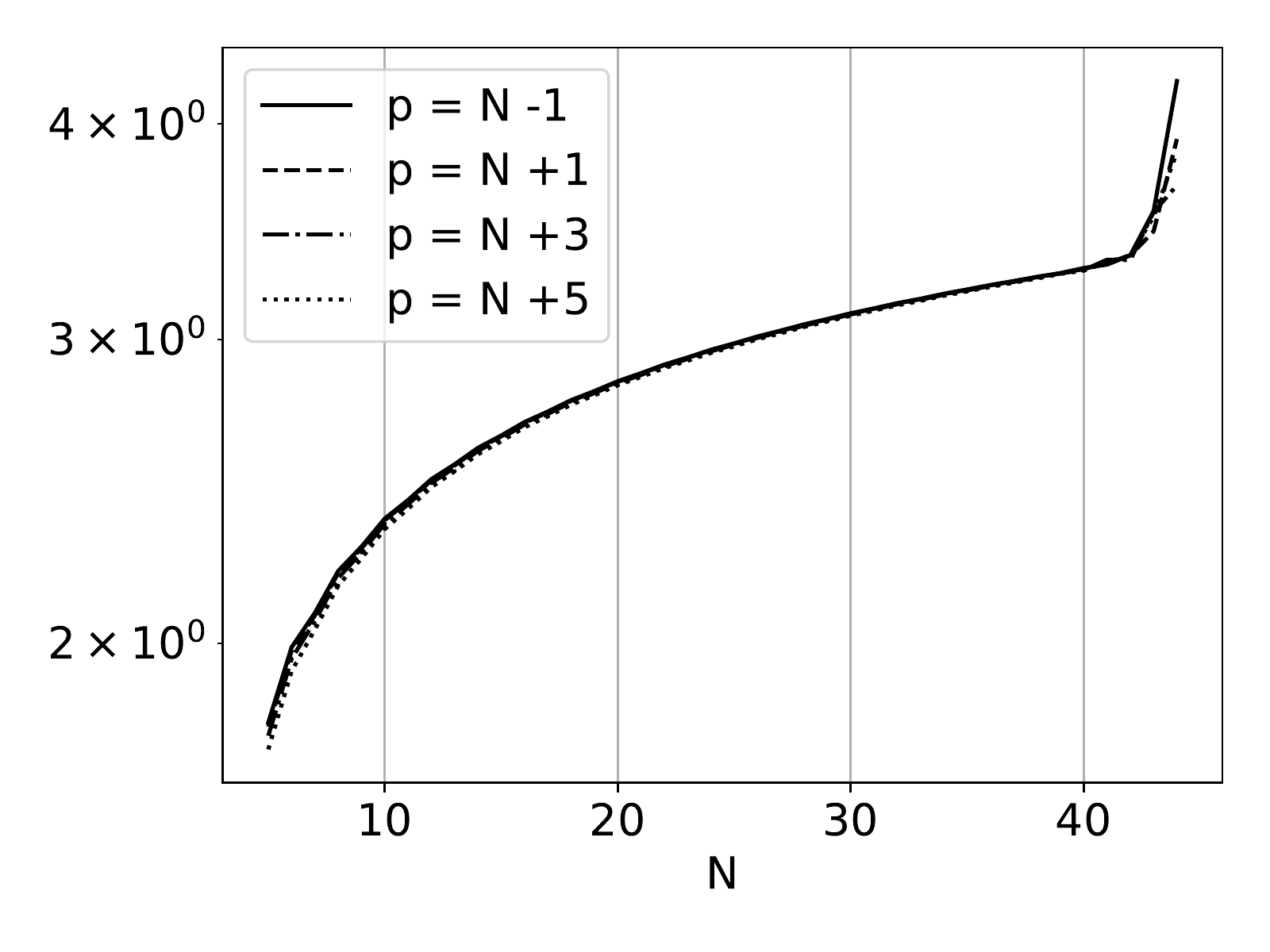}
&\includegraphics[width=.45\textwidth]{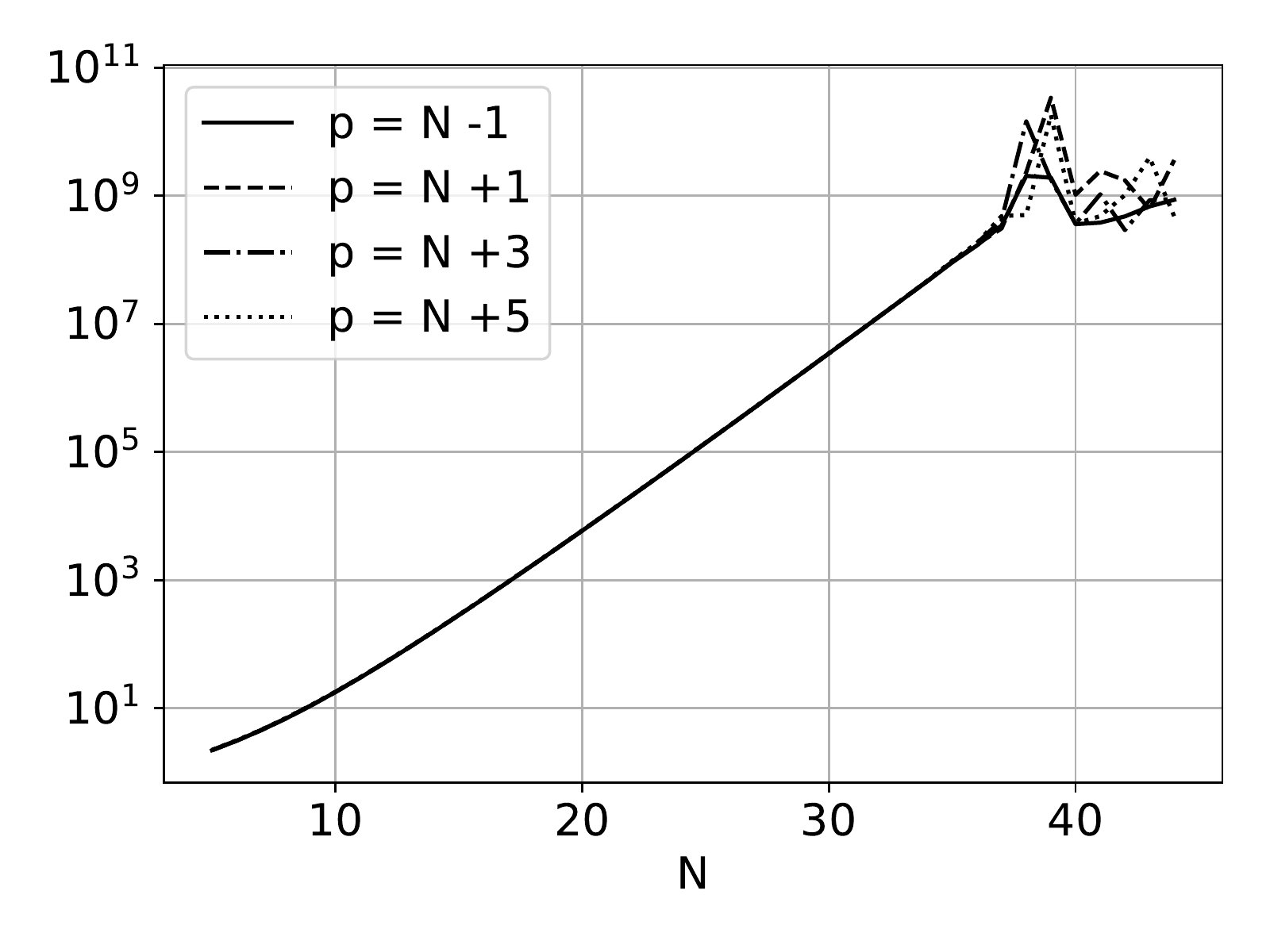}
\\
\includegraphics[width=.45\textwidth]{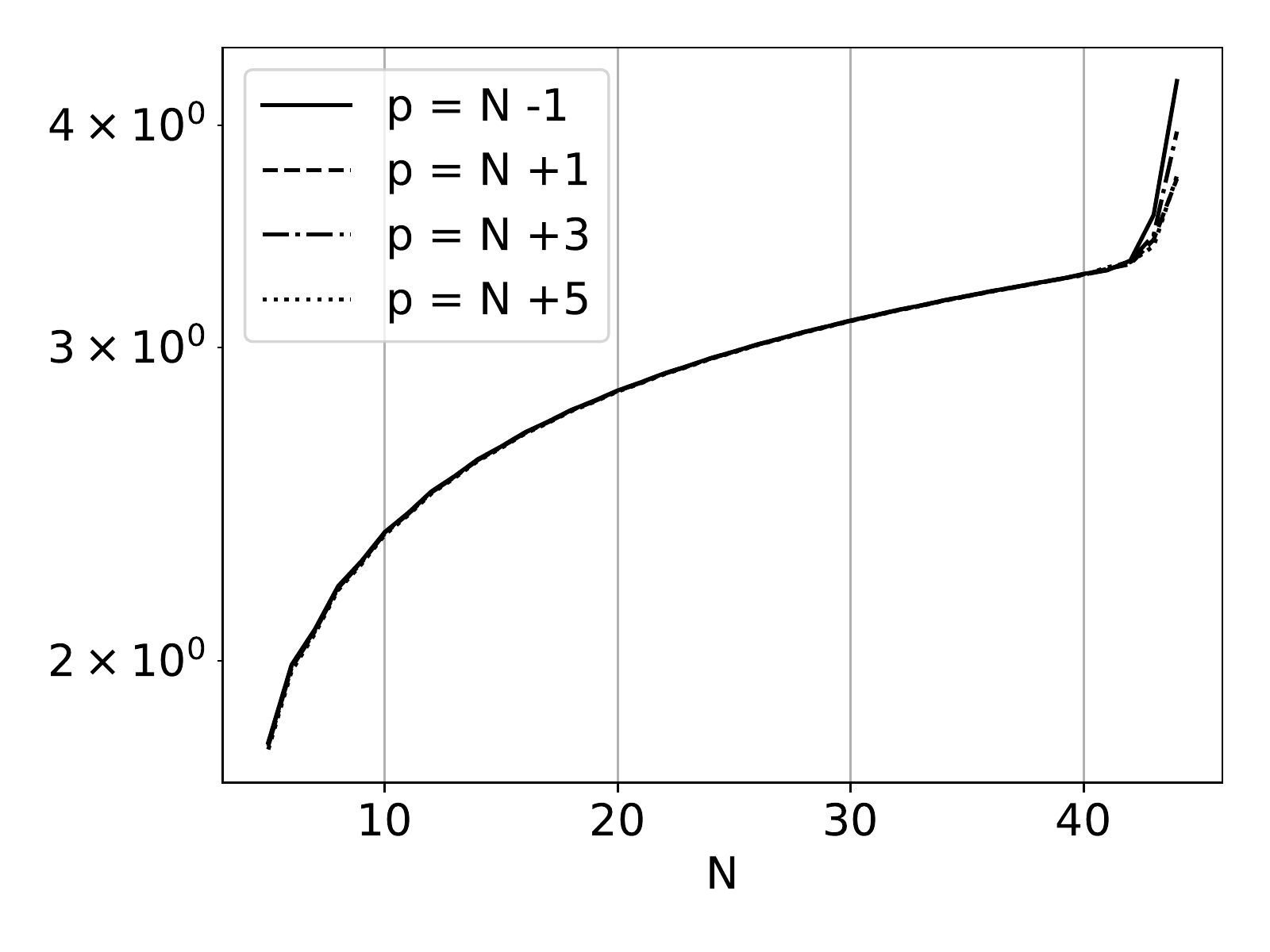}
&\includegraphics[width=.45\textwidth]{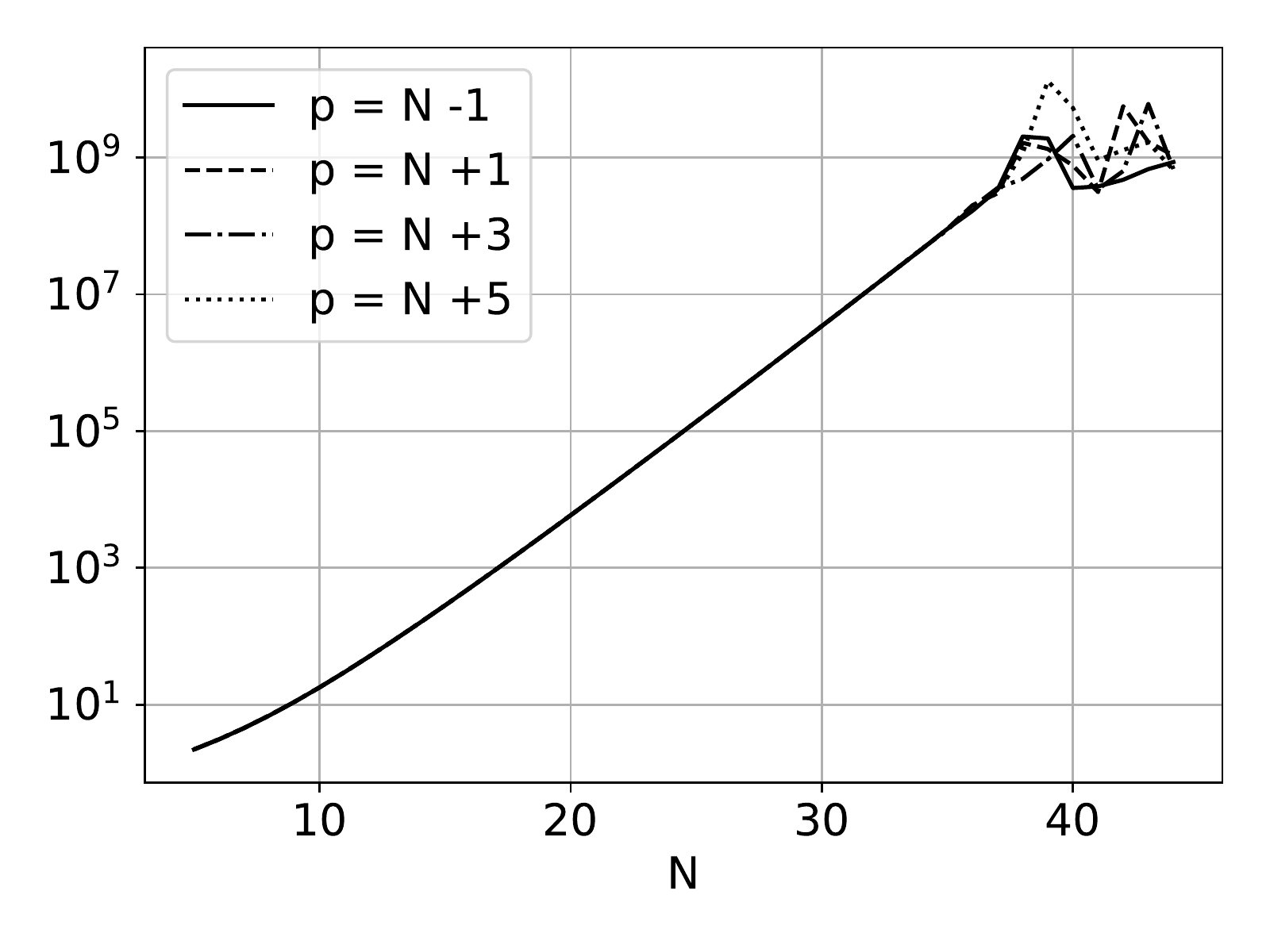}
\end{tabular}
\end{center}

\caption{Growth of the Lebesgue constant associated to $N=5, \dots, 45$ Chebyshev (left) and equally spaced (right) points. We test kernel interpolants 
with various values of $p$, and $a=5$ (top row) and $a=10$ (bottom row). We remark that the results for $p=N-1$ coincide with those of polynomial 
interpolation.}
\label{fig:1d_lebesgue}
\end{figure}

\section{Conclusions and perspectives}\label{sec:conclusions}
In this paper we derived some initial results for the application and analysis of polynomial kernels for the solution of interpolation problems. 
We derived necessary and sufficient conditions for the existence of a unique interpolant and we provided an explicit description of the native spaces of the 
polynomial kernels. In particular, we analyzed in some detail the effect of the kernel parameters on these spaces.
These results were further used to derive some first quantification of the stability and convergence of these interpolants, with particular attention to 
the connection with the corresponding results in polynomial interpolation. Finally, we have shown that a direct solution of the interpolation system leads to 
inaccurate computations, and that the use of the RBF-QR algorithm can significantly mitigate this issue.

Several points remain open and will be the subject of future research. In particular, we outlined in several occasions that a proper selection of the degree 
$p$ 
may be crucial, and that its choice may balance between accuracy and stability. A better understanding of the role of this parameter and a systematic method 
for its determination are open problems. This aspect may be connected to the so-called overparameterized regime and to ridgeless regression in machine learning, 
since by increasing $p$ one may aim at solving a data fitting problem by interpolation without regularization, and use the parameter $a$ as an implicit 
regularizer (see e.g. \cite{Liang2020,Pagliana2020,Richards2021}). 

Moreover, some results of this paper point to the fact that the properties of an interpolation set $X$ may be related to those of a superset $X_{M_a}$ of 
polynomially unisolvent points. Also in this case a quantitative relation is missing, as well as suitable algorithms to select $X$ from $X_{M_a}$ or completing 
$X$ to $X_{M_a}$. In both cases, it would be interesting to investigate processes related to Leja and approximate Fekete points in this context, as well 
as to $P$-greedy points \cite{DeMarchi2005}. 

\paragraph{Acknowledgements}
This research has been accomplished within the Rete ITaliana di Approssimazione (RITA) and the thematic group on Approximation Theory and Applications of the Italian Mathematical Union (UMI).
The authors would like to thank the anonymous reviewers for providing several comments that helped improving this paper.

\bibliography{biblio.bib}
\bibliographystyle{abbrv}

\end{document}